\theoremstyle{plain}
\newtheorem{corollary}{Corollary}
\newtheorem{definition}{Definition}
\newtheorem{lemma}{Lemma}
\newtheorem{proposition}{Proposition}
\newtheorem{remark}{Remark}
\newtheorem{theorem}{Theorem}
\numberwithin{equation}{section}
\begin{document}
\title[Steenrod and Dyer-Lashof algebras ]{The $\func{mod}2$ Steenrod and
Dyer-Lashof algebras as quotients of a free algebra}
\author{Nondas E. Kechagias }
\address{ Department of Mathematics, University of Ioannina, \\
Greece, 45500}
\email{nkechag@uoi.gr}
\urladdr{http://users.uoi.gr/nkechag/}
\thanks{}
\subjclass[2000]{Primary 55S10, Secondary 16W30.}
\keywords{Hopf algebras, Dyer-Lashof algebra, Steenrod algebra.}

\begin{abstract}
A non-connected neither of finite type Hopf algebra $\mathcal{F}_{0}$\ is
defined over $%
\mathbb{Z}
/2%
\mathbb{Z}
$\ and its $\hom $\ dual turns out to be a tensor product of polynomial
algebras. Certain quotient Hopf algebras include the Steenrod and
Dyer-Lashof algebras. This setting provides a map between the Steenrod
coalgebra and a direct limit of Dyer-Lashof coalgebras.\ 
\end{abstract}

\maketitle

Certain important algebras in algebraic topology admit the structure of a
Hopf algebra and \ their duals turned out to be polynomial algebras. It is
well known that the homology Hopf algebras $H_{\ast }\left( BU;%
\mathbb{Z}
\right) $ and $H_{\ast }\left( BO;%
\mathbb{Z}
/p%
\mathbb{Z}
\right) $\ enjoy a very special property: that they are self-dual, so that
they are isomorphic to the cohomology Hopf algebras $H^{\ast }\left( BU;%
\mathbb{Z}
\right) $\ and $H^{\ast }\left( BO;%
\mathbb{Z}
/p%
\mathbb{Z}
\right) $. Following the original paper of Milnor and Moore \cite%
{Milnor-Moore} people have studied Hopf algebras, mainly connected and of
finite type. Connected structure arise ubiquitously in algebraic topology.
The homology of non-connected but grouplike homotopy associative H-space
leads to the more general setting of component Hopf algebras. Most of the
structure Theorems for connected Hopf algebras are best derived from the
Poincar\'{e}-Birkoff-Witt Theorem. If one drops any one of the hypotheses,
then one is likely to find counterexamples.\ \ \ \ 

The purpose of this note is to connect two of the most well known Hopf
algebras in algebraic topology. The first is the $\func{mod}2$\ Steenrod
algebra and the second is the $\func{mod}2$ Dyer-Lashof algebra. We
re-derive Milnor's and Madsen's results. Milnor \cite{Milnor 1958} showed
that the $\hom $ dual of the $\func{mod}2$ Steenrod algebra is a polynomial
algebra and Madsen \cite{Madsen} proved that the $\hom $ dual of the $\func{%
mod}2$\ Dyer-Lashof algebra turned out to be a tensor product of polynomial
algebras. The first is connected of finite type and the second is not. The
corresponding proofs were not an easy task and their description as algebras
were given in terms of generators. This phenomenon studied by certain
authors, namely the notion of a Hopf algebra been copolynomial i.e. to have
polynomial dual. It is the purpose of this note to provide a general setting
including both the Steenrod (section 2) and Dyer-Lashof algebras (section
3). This setting also provides an induced map from the Steenrod coalgebra to
a direct limit of Dyer-Lashof coalgebras (corollary \ref{final corollary}).

A lot of research has been done concerning similarities between the Steenrod
(\cite{Steenrod}) and Araki-Kudo (\cite{Araki-Kudo}) Dyer-Lashof (\cite%
{Dyer-Lashof}) algebras. Let us just recall a few of them. First of all May'
paper (\cite{May1}) gave a treatment of co-homology operations in an
algebraic setting which included all the usual cases. In his setting $P^{0}$
(the zero operator) need not be the identity operator. He proved that the
Araki-Kudo Dyer-Lashof and Steenrod operations are special cases of a single
general construction. Bisson and Joyal (\cite{Bisson-Joyal}) defined a $Q$%
-ring equipped with square operations satisfying the Adem-Cartan formulae.
It turned out that the homology of the 0-space of the sphere spectrum is the
free $Q$-ring on one generator and the dual of the Steenrod algebra is also
a $Q$-ring provided the zero operator need not be the identity. Pengelley
and Williams (\cite{Pengelley-Williams}) constructed a bigraded algebra,
called the Kudo-Araki-May algebra, $K$ such that it is isomorphic to $%
\mathcal{R}$ as coalgebras. They also proved that there is an algebra
bijection between $\mathcal{A}_{2}$\ and a certain inverse limit of
subspaces of $K$.\ 

It shall also be mentioned that certain Hopf algebras admit the action of
the opposite of the Steenrod algebra. The action of the Steenrod algebra on
certain duals has been studied extensively because of important applications.

We thank Professor May for bringing to our attention many relevant works. \
\ \ 

Let $\mathcal{F}_{0}$ denote the free graded associative algebra with unit
generated by $\left\{ Q^{i}\;|\;i\geq 0\right\} $\ over $%
\Bbb{F}_2%
$\ with degrees $|Q^{i}|=i$. A typical element of $\mathcal{F}_{0}$ is given
by juxtaposition and is of the form 
\begin{equation*}
Q^{I}=Q^{i_{k}}...Q^{i_{1}}
\end{equation*}%
associated with the sequence $I=\left( i_{k},...,i_{1}\right) $, $I\in 
\mathbb{N}
^{k}$.\ Define the degree, length and excess of $I$ by 
\begin{equation*}
|I|=\tsum\limits_{t=1}^{k}i_{t}\text{, }l\left( I\right) =k\text{ and }%
e(I)=i_{k}-\left( \sum\limits_{1}^{k-1}i_{t}\right) \text{ respectively.}
\end{equation*}%
The empty sequence representing the identity element is of degree zero,
length zero, and of infinite excess. Let $\mathcal{F}_{0}^{\left( i\right) }$
denote the vector subspace of $\mathcal{F}_{0}$ spanned by monomials of
degree $i$. Since $\mathcal{F}_{0}^{\left( i\right) }$ is an infinite
dimensional vector space, $\mathcal{F}_{0}$\ is an object considered
intractable. It turns out to be manageable using the length.

$\mathcal{F}_{0}$ admits the structure of a Hopf algebra with coproduct
defined on generators by 
\begin{equation*}
\psi \left( Q^{i}\right) =\tsum_{t=0}^{i}Q^{i-t}\otimes Q^{t}
\end{equation*}%
and with augmentation defined on $\mathcal{F}_{0}^{\left( 0\right) }=%
\Bbb{F}_2%
[Q^{0}]$ by $\varepsilon \left( Q^{0}\cdots Q^{0}\right) =1$. Here $\mathcal{%
F}_{0}^{\left( 0\right) }$ is a polynomial algebra. Thus $\mathcal{F}_{0}$
is not connected, neither of finite type. We prove that the dual of $%
\mathcal{F}_{0}$, $\mathcal{F}_{0}^{\ast }$,\ is a tensor product of
polynomial algebras (Theorem \ref{Basic}). This is a key result. Certain
interesting quotients of $\mathcal{F}_{0}$\ including both the Steenrod and
Dyer-Lashof algebras can be studied in this setting.

Let $R[k]$ denote the subcoalgebra of the Dyer-Lashof Hopf algebra $R$\
consisting of fixed length $k$\ elements (see \ref{length k}). A coalgebra
map $R[k]\rightarrow R[k+1]$ is defined respecting the opposite of the
Steenrod algebra action and the family of these maps induce a direct limit $%
\underrightarrow{\lim }R[k]$. Finally a coalgebra map between the Steenrod
algebra and the previous limit is induced in corollary \ref{final corollary}.

We extend our results to the $p$\ odd prime number case in \cite{Kech3}. But
instead of the Steenrod and Dyer-Lashof algebras one will consider the
sub-Hopf algebras generated by the reduced powers $P^{i}$\ and $Q^{i}$\
respectively.

\section{The free algebra}

For each monomial $Q^{J}$\ we denote its sequence $J$\ by $I\left(
Q^{J}\right) $.\ The sum $I+J$\ and difference $I-J$\ of two sequences of
length $k$\ is defined termwise. $I-J$\ is undefined , if an entry is less
than zero. Then $e\left( I+J\right) =e\left( I\right) +e\left( J\right) $, $%
d\left( I+J\right) =d\left( I\right) +d\left( J\right) $.

\begin{definition}
We define a total ordering on the set of finite sequences as follows. 
\newline
a) $I<J$, if $l\left( I\right) <l\left( J\right) $. \newline
b) If $I=\left( i_{k},\cdots ,i_{1}\right) $ let $I_{t}=\left( i_{t},\cdots
,i_{1}\right) $ for $1\leq t\leq k$. For sequences of the same length we
define $I<J$, if $e\left( I_{t}\right) <e\left( J_{t}\right) $\ for the
smallest $t$\ such that $e\left( I_{t}\right) \neq e\left( J_{t}\right) $.
Observe that $e\left( I_{t}\right) =e\left( J_{t}\right) $ for all $t$\
implies that $I=J$ provided $l\left( I\right) =l\left( J\right) $. \ 
\end{definition}

\begin{lemma}
$\mathcal{F}_{0}$ is a component coalgebra with respect to length and if $%
\pi \mathcal{F}_{0}=\left\{ Q^{I}\;|\;\psi \left( Q^{I}\right) =Q^{I}\otimes
Q^{I}\right\} $, then $\pi \mathcal{F}_{0}$\ is the free monoid generated by 
$Q^{0}$. Moreover, the component $\mathcal{F}_{0}\left[ k\right] $ of $%
\left( Q^{0}\right) ^{k}$\ is the subcoalgebra of $\mathcal{F}_{0}$\ spanned
by 
\begin{equation*}
\left\{ Q^{I}\;|\;l\left( I\right) =k\right\} \text{.}
\end{equation*}%
Thus 
\begin{equation*}
\mathcal{F}_{0}=\tbigoplus\limits_{k\geq 0}\mathcal{F}_{0}\left[ k\right] 
\text{ with }\mathcal{F}_{0}\left[ 0\right] =%
\Bbb{F}_2%
\text{. }
\end{equation*}%
Moreover, $\mathcal{F}_{0}\left[ k\right] \otimes \mathcal{F}_{0}\left[ l%
\right] \rightarrow \mathcal{F}_{0}\left[ k+l\right] $. \ \bigskip
\end{lemma}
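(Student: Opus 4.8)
The statement has four parts, and I would organize the proof around them. First, to show $\mathcal{F}_0$ is a component coalgebra with respect to length, I would verify that the coproduct preserves length: since $\psi(Q^i)=\sum_{t=0}^{i}Q^{i-t}\otimes Q^t$ and each $Q^{i-t}$, $Q^t$ has length $1$, extending multiplicatively gives $\psi(Q^I)\in\bigoplus_{j} \mathcal{F}_0[l(I)]\otimes\mathcal{F}_0[l(I)]$ — wait, more carefully: $\psi$ is an algebra map, so $\psi(Q^I)=\prod \psi(Q^{i_r})$, and each factor contributes a tensor of two length-one elements, so every term of $\psi(Q^I)$ lies in $\mathcal{F}_0[k]\otimes\mathcal{F}_0[k]$ where $k=l(I)$. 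Hence the length grading is a coalgebra grading and $\mathcal{F}_0=\bigoplus_k\mathcal{F}_0[k]$ as coalgebras, with $\mathcal{F}_0[0]=\Bbb{F}_2$.

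Second, for the grouplike elements $\pi\mathcal{F}_0$: an element $x$ with $\psi(x)=x\otimes x$ must be homogeneous of some length $k$ (by the grading just established), and then $\varepsilon(x)=1$ forces $x$ to be a sum of monomials. I would argue that among length-$k$ monomials, $\psi\big((Q^0)^k\big)=\sum \binom{k}{?}\ldots$ — actually $\psi(Q^0)=Q^0\otimes Q^0$, so $(Q^0)^k$ is grouplike, and conversely if $\psi(x)=x\otimes x$ with $x=\sum_J c_J Q^J$ a sum of length-$k$ monomials, comparing with the known coproduct formula and using that the only length-$k$ monomial $Q^J$ with $\psi(Q^J)=Q^J\otimes Q^J$ is $(Q^0)^k$ (since any $Q^{i_r}$ with $i_r>0$ produces cross terms $Q^{i_r-t}\otimes Q^t$ with $0<t<i_r$ or at least the term $1\otimes Q^{i_r}\cdot(\ldots)$ that cannot be absorbed) pins down $x=(Q^0)^k$. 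So $\pi\mathcal{F}_0=\{(Q^0)^k : k\geq 0\}$, the free monoid on $Q^0$.

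Third, the component of the grouplike $(Q^0)^k$ is by definition the largest subcoalgebra in which $(Q^0)^k$ is the unique grouplike, equivalently the span of all monomials $Q^I$ whose coproduct "connects" to $(Q^0)^k$; since $\psi(Q^I)$ has $(Q^0)^k\otimes Q^I$ and $Q^I\otimes(Q^0)^k$ as its extreme terms (the $t=0$ and $t=i_r$ choices in each factor) exactly when $l(I)=k$, the component $\mathcal{F}_0[k]$ is spanned by $\{Q^I : l(I)=k\}$, which also re-proves the direct sum decomposition. Finally, the multiplicativity $\mathcal{F}_0[k]\otimes\mathcal{F}_0[l]\to\mathcal{F}_0[k+l]$ is immediate: juxtaposing $Q^I$ of length $k$ with $Q^J$ of length $l$ gives $Q^{I}Q^{J}$ of length $k+l$.

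**Main obstacle.** The only genuinely non-routine point is the grouplike computation — showing $(Q^0)^k$ is the *only* grouplike element in length $k$, not merely one of them, and that an arbitrary grouplike must be a single monomial rather than a linear combination. I would handle this by working in the cofree-like basis: write a putative grouplike $x=\sum_J c_J Q^J$ (all $l(J)=k$), apply $\psi$, and match coefficients of $Q^J\otimes Q^{J'}$ for $J\neq J'$; the "off-diagonal" terms coming from the nontrivial part of $\psi(Q^{i_r})=Q^{i_r}\otimes 1 + \cdots + 1\otimes Q^{i_r}$ force each $c_J c_{J'}$ relation, and an induction on the largest index appearing (or on $|I|$) shows the only solution is one $c_J=1$ with $J=(0,\ldots,0)$. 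This is the step I would write out with care; everything else follows from $\psi$ being an algebra map together with the length bookkeeping.
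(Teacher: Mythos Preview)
The paper states this lemma without proof, treating it as an immediate consequence of the definitions. Your argument is correct and fills in exactly the routine verifications the author omitted: that $\psi$, being an algebra map with $\psi(Q^i)=\sum_t Q^{i-t}\otimes Q^t$ a sum of length-one tensors, sends $\mathcal{F}_0[k]$ into $\mathcal{F}_0[k]\otimes\mathcal{F}_0[k]$; that the only grouplike monomials are the $(Q^0)^k$; that the component of $(Q^0)^k$ is the span of length-$k$ monomials; and that juxtaposition adds lengths.

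One remark on efficiency: the set $\pi\mathcal{F}_0$ is defined in the statement as $\{Q^I:\psi(Q^I)=Q^I\otimes Q^I\}$, i.e.\ grouplike \emph{monomials}, not arbitrary grouplike elements. So you need not worry about linear combinations at all. For a single monomial $Q^I$ with $l(I)=k$, the expansion $\psi(Q^I)=\sum_{T}Q^{I-T}\otimes Q^{T}$ (sum over $0\le T\le I$ componentwise) consists of distinct basis tensors with no cancellation, and it contains the term $(Q^0)^k\otimes Q^I$. Hence $\psi(Q^I)=Q^I\otimes Q^I$ forces $Q^I=(Q^0)^k$ in one line. Your ``main obstacle'' is therefore not an obstacle here; the induction on coefficients you sketch is unnecessary for what is actually being claimed.
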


We shall determine the primitive elements of $\mathcal{F}_{0}\left[ k\right] 
$. Define%
\begin{eqnarray*}
x_{k,0} &=&\underset{k}{\underbrace{Q^{0}\cdots Q^{0}}}\text{ for }1\leq k%
\text{;} \\
x_{k,i} &=&\underset{k-i}{\underbrace{Q^{0}\cdots Q^{0}}}\underset{i}{%
\underbrace{Q^{1}Q^{0}\cdots Q^{0}}}\text{ for }1\leq i\leq k\text{.}
\end{eqnarray*}%
\ \ \ \bigskip

\begin{lemma}
$x_{k,i}$ is a primitive monomial for $k\geq 1$ and $k\geq i\geq 1$.
Moreover, $\left\{ x_{k,i}\;|\;1\leq i\leq k\right\} $\ is a basis for the
primitive elements of\ $\mathcal{F}_{0}\left[ k\right] $.
\end{lemma}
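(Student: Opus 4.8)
First I would verify that each $x_{k,i}$ is primitive, which is a direct computation with the coproduct. Since $\psi(Q^0) = Q^0 \otimes Q^0$ (so $Q^0$ is grouplike) and $\psi(Q^1) = Q^1 \otimes Q^0 + Q^0 \otimes Q^1$, and $\psi$ is an algebra map, one expands $\psi(x_{k,i}) = \psi(Q^0)^{k-i}\psi(Q^1)\psi(Q^0)^{i-1}$. Using that $(Q^0)^{\otimes}$-factors are grouplike, this collapses to $x_{k,i}\otimes (Q^0)^k + (Q^0)^k \otimes x_{k,i}$, exactly the primitivity condition relative to the grouplike element $(Q^0)^k$ which is the unit of the component $\mathcal{F}_0[k]$. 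So linear independence is clear (the $x_{k,i}$ are distinct monomials), and it remains to show they span.

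For the spanning statement, the natural approach is a dimension count in degree $\le 1$ combined with the structure of $\mathcal{F}_0[k]$ as a coalgebra. Primitives of $\mathcal{F}_0[k]$ live in the augmentation ideal relative to the grouplike $(Q^0)^k$; a monomial $Q^I$ with $l(I)=k$ has degree $|I| = \sum i_t$. I would argue that a primitive element must be a linear combination of monomials of the same degree (since $\psi$ preserves degree and the component decomposition is compatible), so it suffices to work degree by degree. In degree $0$ the only monomial is $(Q^0)^k = x_{k,0}$, which is grouplike, not primitive, contributing nothing. In degree $1$ the monomials of length $k$ are exactly $x_{k,1},\dots,x_{k,k}$ (one $Q^1$ in one of $k$ slots, the rest $Q^0$), all primitive, so the degree-$1$ primitives are spanned by these $k$ elements. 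The key point is then to show there are \emph{no} primitives in degree $\ge 2$: if $Q^I$ appears in a primitive $x$ with $|I|\ge 2$, then in $\psi(x)$ the terms $Q^{I'}\otimes Q^{I''}$ with both $I',I''$ of positive degree and $l(I')=l(I'')=k$ cannot all cancel. I would make this precise by choosing, among monomials appearing in $x$ of top degree, one that is maximal (or minimal) in the total order of the first Definition, and tracking a specific "leading" tensor summand of its coproduct that cannot be hit by the coproduct of any other monomial in $x$ — a standard leading-term argument.

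The main obstacle will be the last step: ruling out higher-degree primitives by a clean leading-term argument. The difficulty is that the coproduct of a length-$k$ monomial $Q^{i_k}\cdots Q^{i_1}$ produces a sum over all ways of splitting each $i_t = a_t + b_t$, giving $Q^{(a_k,\dots,a_1)} \otimes Q^{(b_k,\dots,b_1)}$, and one must identify a term that is not produced by any other monomial of the same total degree. I expect the right invariant to track is the pair $(e(I_t))_{t}$ of partial excesses used in the ordering of the Definition, or equivalently the "staircase" shape of $I$; choosing the split that concentrates as much degree as possible into one tensor factor while keeping the shape extremal should isolate a surviving term. Once this combinatorial lemma is in place, primitivity forces all top-degree monomials to have degree $1$, and the degree-$1$ analysis above finishes the proof.
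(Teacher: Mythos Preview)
Your approach is correct, and the step you flag as the main obstacle is in fact much easier than you anticipate. Since
\[
\psi(Q^I)=\sum_{I'+I''=I} Q^{I'}\otimes Q^{I''},
\]
a basis tensor $Q^{I'}\otimes Q^{I''}$ occurs in $\psi(Q^J)$ for exactly one $J$, namely $J=I'+I''$. Hence if $x=\sum_I c_I Q^I$ is primitive and some $I$ with $c_I\neq 0$ has $|I|\ge 2$, choose any splitting $I=I'+I''$ with both parts nonzero; the coefficient of $Q^{I'}\otimes Q^{I''}$ in $\psi(x)$ is precisely $c_I$, and primitivity forces it to vanish. No ordering or extremal-term bookkeeping is needed at all.

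This is a genuinely different route from the paper, which argues by induction on $k$. Writing $I=(i_k,J)$, the paper separates the case $i_k=0$, where primitivity of $Q^0Q^J$ reduces to primitivity of $Q^J\in\mathcal{F}_0[k-1]$, from the case of positive leading entry, where one exhibits the cross term $x_{k,k}\otimes Q^{(i_k-1,J)}$ in $\psi(Q^I)$ to force $J=0$ and $i_k=1$. Underneath this is the coalgebra tensor decomposition $\mathcal{F}_0[k]\cong\mathcal{F}_0[1]\otimes\mathcal{F}_0[k-1]$. Your direct degree-by-degree argument avoids the induction and handles general primitive \emph{elements} transparently (the paper's phrasing is in terms of single monomials); the paper's approach, on the other hand, makes the length filtration do the work and dovetails with the maps $\Phi_k$ used immediately afterward.
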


\begin{proof}
Proceed by induction on $k$. The case $k=1$\ is trivial. Let $I=\left(
i_{k},J\right) $\ with $J=\left( i_{k-1},\cdots ,i_{1}\right) $. There are
two cases to be considered: $i_{k}=0$ and $i_{k}=1$. If $i_{k}=0$, then $%
Q^{J}$\ is a primitive element. Let $i_{k}=1$\ and $J\neq \left( 0,\cdots
,0\right) $. Then $\psi \left( Q^{I}\right) $\ contains the summand $%
Q^{1}Q^{\left( 0,\cdots 0\right) }\otimes Q^{0}Q^{J}$. The claim follows. \ 
\end{proof}

To describe the $\hom $ dual of $\mathcal{F}_{0}$\ we proceed by analogy
with Madsen's computations of the dual of the Dyer-Lashof algebras. The
computation of $\mathcal{F}_{0}\left[ k\right] ^{\ast }$\ is based on
multiplication of the duals of monomials.

Let $I\left[ k\right] $ be the basis of monomials of $\mathcal{F}_{0}\left[ k%
\right] $.

\begin{lemma}
Define $\phi :%
\mathbb{N}
^{k}\rightarrow I\left[ k\right] $\ by $\phi \left( n_{k},\cdots
n_{1}\right) =Q^{\tsum\limits_{1}^{k}n_{t}I\left( x_{k,t}\right) }$. Then $%
\phi $\ is an isomorphism of sets.
\end{lemma}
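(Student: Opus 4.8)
The plan is to unwind the definitions and recognize $\phi$ as the tautological identification of exponent vectors with monomials of length $k$. First I would read off the shape of the sequences $I(x_{k,t})$ for $1\le t\le k$. Writing a length-$k$ sequence as $(a_{k},\dots ,a_{1})$, the definition of $x_{k,t}$ puts $k-t$ factors $Q^{0}$ to the left of the unique factor $Q^{1}$ and $t-1$ factors $Q^{0}$ to its right, so that $Q^{1}$ occupies slot $t$; hence $I(x_{k,t})$ is the sequence with a $1$ in position $t$ and $0$ in every other position. The only thing requiring attention here is keeping the slot convention for sequences consistent with the definition of the $x_{k,t}$, which is pure bookkeeping.

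Next I would check that $\phi$ is well defined with values in $I[k]$. Since $I+J$ is computed termwise for sequences of the same length, and each $I(x_{k,t})$ has non-negative entries and length $k$, the sum $\sum_{t=1}^{k}n_{t}I(x_{k,t})$ is again a non-negative sequence of length $k$, so $Q^{\sum_{t=1}^{k}n_{t}I(x_{k,t})}$ is a monomial of $\mathcal{F}_{0}\left[ k\right] $ and indeed lies in $I[k]$. Substituting the description of $I(x_{k,t})$ obtained above gives $\sum_{t=1}^{k}n_{t}I(x_{k,t})=(n_{k},\dots ,n_{1})$, so in fact $\phi (n_{k},\dots ,n_{1})=Q^{(n_{k},\dots ,n_{1})}$.

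Finally I would invoke freeness of $\mathcal{F}_{0}$: as the free associative algebra on the generators $Q^{i}$, its monomials (words in the $Q^{i}$) are linearly independent, so $I\mapsto Q^{I}$ is a bijection from the set of sequences of length $k$ onto $I[k]=\left\{ Q^{I}\;|\;l\left( I\right) =k\right\} $; this is the only place where anything beyond notation enters. Composing with the identity map $\mathbb{N}^{k}\to \left\{ I\;|\;l\left( I\right) =k\right\} $ then shows $\phi$ is a bijection, with inverse $Q^{(i_{k},\dots ,i_{1})}\mapsto (i_{k},\dots ,i_{1})$.

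There is no genuine obstacle; the content of the lemma is organizational, namely that every length-$k$ monomial is encoded by its exponent vector $(n_{k},\dots ,n_{1})$ recorded against the primitive monomials $x_{k,1},\dots ,x_{k,k}$. This is precisely the form in which monomials of $\mathcal{F}_{0}\left[ k\right] $ need to be presented in order to run the Madsen-style computation of $\mathcal{F}_{0}\left[ k\right] ^{\ast }$ that follows, so the one point to be careful about is simply the index/slot conventions.
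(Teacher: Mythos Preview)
Your argument is correct. The paper states this lemma without proof, treating it as an immediate bookkeeping observation; your write-up simply makes explicit what the author leaves implicit, namely that $I(x_{k,t})$ is the $t$-th standard basis vector in $\mathbb{N}^{k}$, so that $\phi(n_{k},\dots,n_{1})=Q^{(n_{k},\dots,n_{1})}$ is the tautological bijection between $\mathbb{N}^{k}$ and the monomial basis of $\mathcal{F}_{0}[k]$.
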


Since $\mathcal{F}_{0}$\ is a coassociative, cocommutative coalgebra, $%
\mathcal{F}_{0}{}^{\ast }=\tprod \mathcal{F}_{0}\left[ k\right] ^{\ast }$\
is an algebra. We define elements of $\mathcal{F}_{0}\left[ k\right] ^{\ast
} $\ by 
\begin{eqnarray*}
y_{k,o} &=&\left( \left( Q^{0}\right) ^{k}\right) ^{\ast }\text{, }k\geq 0%
\text{;} \\
y_{k,i} &=&\left( x_{k,i}\right) ^{\ast }\text{, }1\leq i\leq k\text{.}
\end{eqnarray*}%
Now $y_{k,0}$\ is the identity element in $\mathcal{F}_{0}\left[ k\right]
^{\ast }$\ and $\tprod y_{k,0}$\ is the identity element in $\mathcal{F}%
_{0}{}^{\ast }$. The augmentation of $\mathcal{F}_{0}{}^{\ast }$\ is given
by 
\begin{equation*}
\varepsilon \tprod a_{k}y_{k,0}=a_{0}\text{. }
\end{equation*}%
$\mathcal{F}_{0}{}^{\ast }$\ is not a coalgebra and $\mathcal{F}_{0}\left[ k%
\right] ^{\ast }\cdot \mathcal{F}_{0}\left[ l\right] ^{\ast }=0$, for $k\neq
l$.\ 

\begin{theorem}
\label{Basic}$\mathcal{F}_{0}\left[ k\right] ^{\ast }$ is a polynomial
algebra on $\left\{ y_{k,i}\;|\;1\leq i\leq k\right\} $.
\end{theorem}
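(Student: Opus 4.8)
The plan is to determine the multiplication of $\mathcal{F}_{0}[k]^{\ast}$ by a direct pairing computation, exploiting that the comultiplication of $\mathcal{F}_{0}$ is the \emph{additive} one. The first step is to record the closed form of the iterated coproduct. Since $\psi$ is an algebra map with $\psi(Q^{i})=\sum_{a+b=i}Q^{a}\otimes Q^{b}$, multiplying out $\psi(Q^{i_{k}})\cdots\psi(Q^{i_{1}})$ in $\mathcal{F}_{0}\otimes\mathcal{F}_{0}$ gives, for $J=I(Q^{J})\in\mathbb{N}^{k}$,
\begin{equation*}
\psi(Q^{J})=\sum_{\substack{A,B\in\mathbb{N}^{k}\\ A+B=J}}Q^{A}\otimes Q^{B},
\end{equation*}
each coefficient being $1$, with $A+B$ taken termwise as in the conventions at the start of the section. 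By coassociativity and a short induction this upgrades to
\begin{equation*}
\psi^{(N-1)}(Q^{J})=\sum_{A_{1}+\cdots+A_{N}=J}Q^{A_{1}}\otimes\cdots\otimes Q^{A_{N}},
\end{equation*}
again with all coefficients $1$; the identity holds over $\mathbb{Z}$, hence over $\Bbb{F}_2$. One also notes that $\mathcal{F}_{0}[k]$ is finite dimensional in each degree, so its graded dual causes no trouble.

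Next I would evaluate an arbitrary ordered monomial in the generators against the basis $I[k]$. Recall that $I(x_{k,i})$ is the length-$k$ sequence with a single $1$ in slot $i$ and zeros elsewhere, so $\sum_{t=1}^{k}n_{t}I(x_{k,t})=(n_{k},\dots,n_{1})$ and $y_{k,i}=(Q^{I(x_{k,i})})^{\ast}$. By definition of the product in the dual, $\langle y_{k,k}^{n_{k}}\cdots y_{k,1}^{n_{1}},Q^{J}\rangle$ equals $(y_{k,k}^{\otimes n_{k}}\otimes\cdots\otimes y_{k,1}^{\otimes n_{1}})$ applied to $\psi^{(N-1)}(Q^{J})$, where $N=\sum n_{t}$; by the displayed formula this is the number of decompositions $A_{1}+\cdots+A_{N}=J$ whose first $n_{k}$ terms equal $I(x_{k,k})$, whose next $n_{k-1}$ terms equal $I(x_{k,k-1})$, and so on. Such a decomposition exists iff $J=\sum_{t}n_{t}I(x_{k,t})$, and then it is unique, whence
\begin{equation*}
y_{k,k}^{n_{k}}\cdots y_{k,1}^{n_{1}}=\bigl(\phi(n_{k},\dots,n_{1})\bigr)^{\ast}.
\end{equation*}
Since $\phi$ is a bijection onto $I[k]$, these ordered monomials are exactly the dual basis, hence a vector-space basis of $\mathcal{F}_{0}[k]^{\ast}$.

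Finally, because $\mathcal{F}_{0}[k]$ is cocommutative (the coproduct is symmetric on generators), $\mathcal{F}_{0}[k]^{\ast}$ is commutative, so the $y_{k,i}$ pairwise commute and any monomial in them can be brought to standard order; the previous step then shows the standard monomials are linearly independent, so there are no relations. This yields $\mathcal{F}_{0}[k]^{\ast}=\Bbb{F}_2[y_{k,1},\dots,y_{k,k}]$. I expect the only delicate point to be the bookkeeping in the iterated-coproduct formula together with the uniqueness of the prescribed-block decomposition; everything else is formal, the essential observation being that the comultiplication of $\mathcal{F}_{0}$ is additive (equivalently, $\mathcal{F}_{0}[k]$ is a tensor product of $k$ divided-power coalgebras on degree-one generators, whose dual is a polynomial algebra on $k$ generators).
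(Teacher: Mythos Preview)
Your argument is correct, and it is genuinely more direct than the paper's. The paper proceeds by a triangularity argument: using the shift map $\Phi_{k}(Q^{I})=Q^{0}Q^{I}$ and an induction on position, it shows that $\langle \xi^{\Lambda},Q^{J}\rangle\equiv 1$ forces $J\geq\Lambda(I)$ in the given total order, and then concludes by a dimension count that the polynomial algebra on the $y_{k,i}$ fills out all of $\mathcal{F}_{0}[k]^{\ast}$. You instead compute the pairing exactly and obtain the sharper statement that the ordered monomial $y_{k,k}^{n_{k}}\cdots y_{k,1}^{n_{1}}$ \emph{equals} the dual basis element $\bigl(\phi(n_{k},\dots,n_{1})\bigr)^{\ast}$, so the change-of-basis matrix is the identity rather than merely upper-triangular; no dimension count or auxiliary map is needed. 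Your closing remark identifies the structural reason this works: $\mathcal{F}_{0}[k]$ is, as a coalgebra, the $k$-fold tensor product of divided-power coalgebras on a single degree-one generator, whose dual is polynomial on the nose. The paper's more roundabout approach is modeled on Madsen's computation for the Dyer-Lashof algebra, where the analogous monomials are only triangular against the dual basis; that template transfers directly to the quotients $\mathcal{U}[k]$ and $\mathcal{R}[k]$ treated later, which is presumably why the author chose it here.
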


\begin{proof}
Let $\left\langle -,-\right\rangle :\mathcal{F}_{0}\left[ k\right] ^{\ast
}\otimes \mathcal{F}_{0}\left[ k\right] \rightarrow 
\Bbb{F}_2%
$\ be the Kronecker product and $\Phi _{k}:\mathcal{F}_{0}\left[ k\right]
\rightarrow \mathcal{F}_{0}\left[ k+1\right] $ be the map given by $\Phi
_{k}\left( Q^{I}\right) =Q^{0}Q^{I}$. $\Phi $\ is a degree preserving map of
coalgebras such that 
\begin{equation*}
\Phi \left( x_{k,i}\right) =x_{k+1,i+1}\ \text{and dually }\Phi ^{\ast
}\left( y_{k+1,i+1}\right) =y_{k,i}\ \text{for }i\leq k\text{ and }\Phi
^{\ast }\left( y_{k+1,1}\right) =0\text{.}
\end{equation*}%
We compare the polynomial algebra on $\left\{ y_{k,i}\;|\;1\leq i\leq
k\right\} $\ with $\mathcal{F}_{0}\left[ k\right] ^{\ast }$.\ 

Let $\Lambda =\left( \lambda _{k},\cdots ,\lambda _{1}\right) \in 
\mathbb{N}
^{k}$ and 
\begin{equation*}
\xi ^{\Lambda }=\tprod y_{k,i}^{\lambda _{i}}\text{.}
\end{equation*}%
Let $Q^{\Lambda \left( I\right) }=Q^{\lambda _{k}}\cdots Q^{\lambda _{1}}$\
and $\tsum \lambda _{i}=\lambda $. Here $\Lambda \left( I\right) =\tsum
\lambda _{i}I\left( x_{k,i}\right) $. We claim that 
\begin{equation*}
\left\langle \xi ^{\Lambda },Q^{J}\right\rangle \equiv 1\func{mod}2\text{
implies }J\geq \Lambda \left( I\right) \text{.}
\end{equation*}%
Let $\psi \left( \lambda \right) :\mathcal{F}_{0}\left[ k\right] \rightarrow
\left( \mathcal{F}_{0}\left[ k\right] \right) ^{\lambda }$ be the iterated
coproduct. Then 
\begin{equation*}
\psi \left( \lambda \right) \left( Q^{J}\right) =\tsum Q^{J_{i}}\otimes
\cdots \otimes Q^{J_{\lambda }}\text{ such that }\tsum J_{t}=J\text{.}
\end{equation*}%
Let $J=\left( \mu _{k},\cdots ,\mu _{1}\right) $\ with $\tsum \mu
_{i}=\lambda $. 
\begin{equation*}
\left\langle \xi ^{\Lambda },Q^{J}\right\rangle \equiv 1\func{mod}2\text{
implies }\mu _{1}\geq \lambda _{1}\text{ and \ }J\geq \Lambda \left(
I\right) \text{.}
\end{equation*}%
Next we consider the case $\mu _{1}=\lambda _{1}$. Let $\Lambda ^{\prime
}=\left( \lambda _{k},\cdots ,\lambda _{2},0\right) $ and $J^{\prime
}=J-\lambda _{1}I\left( x_{k,1}\right) $. Then $\left\langle \xi ^{\Lambda
^{\prime }},Q^{J\prime }\right\rangle \equiv 1\func{mod}2$ and 
\begin{equation*}
Q^{J^{\prime }}=\Phi \left( Q^{J"}\right) \text{ such that }\tsum_{2}^{k}\mu
_{t}I_{k-1,t-1}=J^{"}\text{. }
\end{equation*}%
Let $\Lambda ^{"}=\left( \lambda _{k},\cdots ,\lambda _{2}\right) $, then 
\begin{eqnarray*}
\left\langle \xi ^{\Lambda ^{\prime }},\Phi \left( Q^{J"}\right)
\right\rangle &\equiv &1\func{mod}2\text{ implies }\left\langle \Phi ^{\ast
}\left( \xi ^{\Lambda ^{\prime }}\right) ,Q^{J"}\right\rangle \equiv 1\func{%
mod}2\text{ and } \\
\left\langle \xi ^{\Lambda ^{"}},Q^{J"}\right\rangle &\equiv &1\func{mod}2%
\text{.}
\end{eqnarray*}%
As before $\mu _{2}\geq \lambda _{2}$. Continuing in this fashion $J\geq
\Lambda \left( I\right) $.

Now, $%
\Bbb{F}_2%
\left[ y_{k,i}\;|\;1\leq i\leq k\right] $ is contained in $\mathcal{F}_{0}%
\left[ k\right] ^{\ast }$, but the two vector spaces have the same dimension
in each degree. \ \bigskip
\end{proof}

\begin{corollary}
$\mathcal{F}_{0}{}^{\ast }=\underleftarrow{\lim }\otimes \mathcal{F}_{0}%
\left[ k\right] ^{\ast }=\tprod \mathcal{F}_{0}\left[ k\right] ^{\ast }$
which is not a Hopf algebra\ but it admits a coproduct on its generators:%
\begin{equation*}
\psi y_{k,i}=\tsum\limits_{t=1}^{i-1}y_{k-t,0}\otimes
y_{t,i}+\tsum\limits_{t=i}^{k-1}y_{k-t,i-t}\otimes y_{t,0}\text{.}
\end{equation*}
\end{corollary}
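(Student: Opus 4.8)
The plan is to derive the coproduct on $\mathcal{F}_0^{\ast}$ by dualizing the product on $\mathcal{F}_0$. Since $\mathcal{F}_0[k]\otimes\mathcal{F}_0[l]\to\mathcal{F}_0[k+l]$ is the only nonzero component of multiplication (as noted, $\mathcal{F}_0[k]^{\ast}\cdot\mathcal{F}_0[l]^{\ast}=0$ for $k\ne l$ on the dual side), the dual coproduct $\psi\colon \mathcal{F}_0[k]^{\ast}\to\bigoplus_{t=0}^{k}\mathcal{F}_0[k-t]^{\ast}\otimes\mathcal{F}_0[t]^{\ast}$ is characterized by $\langle\psi\xi,\,Q^I\otimes Q^J\rangle=\langle\xi,\,Q^IQ^J\rangle$ for $Q^I\in\mathcal{F}_0[k-t]$, $Q^J\in\mathcal{F}_0[t]$. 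So the whole computation reduces to understanding the juxtaposition product $x_{k-t,a}\cdot x_{t,b}$ of the primitive-type monomials on the nose, and reading off when its dual basis element equals $y_{k,i}$.

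First I would write out the product $Q^{0\cdots 0}\,x_{t,b}$ and $x_{k-t,a}\,Q^{0\cdots 0}$ explicitly: prepending $k-t$ copies of $Q^0$ to $x_{t,b}=Q^{0\cdots 0}Q^1Q^{0\cdots 0}$ (with the $Q^1$ in position $b$ from the right) just gives $x_{k,b}$; appending $t$ copies of $Q^0$ to $x_{k-t,a}$ shifts the $Q^1$ to position $a+t$ from the right, giving $x_{k,a+t}$. This already explains the two sums in the claimed formula: the first sum, $\sum_{t=1}^{i-1}y_{k-t,0}\otimes y_{t,i}$, records the splittings where the left tensor factor is the identity $y_{k-t,0}=((Q^0)^{k-t})^{\ast}$ and the right factor is $y_{t,i}$, which requires $i\le t$, i.e. $t\ge i$ — wait, the index range in the statement is $t=1$ to $i-1$ for that term, so I would double-check the bookkeeping: in fact when the $Q^1$ sits in the bottom block $x_{t,i}$ we need $i\le t$, and when it sits in the top block $x_{k-t,i-t}$ we need $1\le i-t\le k-t$, i.e. $t\le i-1$ and $t\le k-1$; these two ranges are $i\le t\le k-1$ and $1\le t\le i-1$ respectively, so the formula as displayed has the two sums and their ranges written in that order. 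I would reconcile the printed ranges with this analysis and present the coproduct as the sum over all ways a single $Q^1$ (coming from the generator $x_{k,i}$, which is primitive up to the $(Q^0)^{\ast}$ grading) can be distributed between the two tensor factors, all other terms vanishing because products of two monomials each containing a $Q^1$ land in degree $\ge 2$ and pair to zero against $x_{k,i}$.

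The technical heart is the vanishing of cross terms: I must show $\langle y_{k,i},\,Q^IQ^J\rangle=0$ unless one of $Q^I,Q^J$ is a power of $Q^0$. This follows from degree/length counting together with Theorem~\ref{Basic}: $Q^IQ^J$ is a monomial of total length $k$, and $x_{k,i}$ has degree $1$; if both $Q^I$ and $Q^J$ had positive degree the product would have degree $\ge 2 > 1 = |x_{k,i}|$, so the pairing is zero for degree reasons; if exactly one, say $Q^J$, has degree $1$ then $Q^I=(Q^0)^{k-t}$ and $Q^IQ^J=\Phi^{k-t}(Q^J)$, and by the duality relations for $\Phi$ established in the proof of Theorem~\ref{Basic} ($\Phi^{\ast}(y_{k,i})=y_{k-1,i-1}$ for $i\ge 2$, $\Phi^{\ast}(y_{k,1})=0$) the pairing is $1$ precisely when $Q^J=x_{t,i-(k-t)}$ with the appropriate index in range, giving the $y_{k-t,i-(k-t)}\otimes y_{t,0}$-type contribution after relabeling. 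The symmetric case gives the other sum.

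I expect the main obstacle to be purely clerical: getting the index shifts in the two sums exactly right (which block the surviving $Q^1$ lands in, and the corresponding constraints on $t$), and making sure the degenerate cases $t=0$ and $t=k$ — where one factor is the full identity and the coproduct reduces to $\psi y_{k,i}\ni 1\otimes y_{k,i}+y_{k,i}\otimes 1$ being absorbed into the endpoints of the sums or not — match the displayed formula. No deep idea is needed beyond ``dualize the juxtaposition product and use that a degree-one primitive generator can only split as $1\otimes x$ or $x\otimes 1$ after the necessary $\Phi$-shifts,'' but the combinatorics of the $x_{k,i}$ indexing must be tracked carefully. I would also remark explicitly that $\psi$ does not make $\mathcal{F}_0^{\ast}$ a Hopf algebra because $\mathcal{F}_0^{\ast}=\prod_k\mathcal{F}_0[k]^{\ast}$ is not of finite type and the product of the $\mathcal{F}_0[k]^{\ast}$ does not distribute over this coproduct — consistent with $\mathcal{F}_0$ itself being neither connected nor of finite type.
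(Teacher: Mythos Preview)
The paper states this corollary without proof, so there is no argument to compare against; your approach---dualizing the juxtaposition product $\mathcal{F}_{0}[k-t]\otimes\mathcal{F}_{0}[t]\to\mathcal{F}_{0}[k]$ and using that $|x_{k,i}|=1$ forces one tensor factor to be a power of $Q^{0}$---is exactly the intended and correct derivation.

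Your hesitation about the index ranges is well founded. With $x_{k,i}$ having its single $Q^{1}$ in position $i$ from the right, the factorization $x_{k,i}=Q^{I}\cdot Q^{J}$ with $l(Q^{J})=t$ gives $Q^{J}=x_{t,i}$ and $Q^{I}=(Q^{0})^{k-t}$ when $t\geq i$, and $Q^{J}=(Q^{0})^{t}$ and $Q^{I}=x_{k-t,i-t}$ when $t\leq i-1$. Hence the reduced coproduct is
\[
\psi y_{k,i}=\sum_{t=i}^{k-1} y_{k-t,0}\otimes y_{t,i}\;+\;\sum_{t=1}^{i-1} y_{k-t,i-t}\otimes y_{t,0},
\]
whereas the displayed formula in the paper has the two summation ranges interchanged (and, as you implicitly note, $y_{t,i}$ with $t<i$ is not even defined). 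So your bookkeeping is right and the printed formula has a typo; your proof should simply record the corrected version.
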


A map 
\begin{equation}
\varphi _{k}:\mathcal{F}_{0}\left[ k\right] \rightarrow \mathcal{F}_{0}\left[
k+1\right] \text{ is defined by }\varphi _{k}Q^{I}=Q^{I}Q^{0}\text{.}
\label{direct F_0}
\end{equation}%
It is a coalgebra injection map\ and the direct limit, $\underrightarrow{%
\lim }\mathcal{F}_{0}\left[ k\right] $, is defined under these maps.\ \ 

\begin{definition}
\label{Steenrod action}Let $\mathcal{A}$\ denote the $\func{mod}2$\ Steenrod
algebra. We define a left action of the Steenrod algebra $\mathcal{A}$\ on $%
\mathcal{F}_{0}$\ as follows%
\begin{eqnarray*}
Sq^{a}Q^{b} &=&\binom{b-a}{a}Q^{b-a}\text{ for }b\geq 0\text{;} \\
Sq^{a}\left( Q^{0}r\right) &=&Q^{0}Sq^{a}\left( r\right) \text{;} \\
Sq^{a}\left( Q^{b}r\right) &=&\tsum\limits_{t}\binom{b-a}{a-2t}%
Q^{b-a+t}Sq^{t}\left( r\right) \text{ for }b>0\text{.}
\end{eqnarray*}%
Here $Q^{j}=0$, if $0>j$.
\end{definition}

Compare with the Nishida relations \cite{Nishida}. The previous action of $%
\mathcal{A}$\ on $\mathcal{F}_{0}$ is called the action of the opposite of
the Steenrod algebra and is abbreviated by $\mathcal{A}^{op}$.\ \ \ 

The next lemma follows.

\begin{lemma}
$\mathcal{F}_{0}\left[ k\right] $\ is closed under the $\mathcal{A}^{op}$\
action and $\mathcal{F}_{0}$\ is an unstable coalgebra over $\mathcal{A}%
^{op} $.\ \ 
\end{lemma}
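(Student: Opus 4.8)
The plan is to dispatch both assertions -- closure of each length component under the action, and that $\mathcal{F}_{0}$ is an unstable coalgebra over $\mathcal{A}^{op}$ -- by exploiting the fact that the recursions of Definition \ref{Steenrod action} visibly preserve length, so that every verification reduces, after peeling off the leading operation $Q^{b}$, to the classical mod $2$ binomial identities (the same ones underlying the consistency of the Nishida relations); throughout, $\binom{n}{k}$ is read as $0$ unless $0\le k\le n$, and $Q^{j}=0$ for $j<0$. Closure of $\mathcal{F}_{0}\left[ k\right] $ I would prove by induction on $k$: for $k=1$, $Sq^{a}Q^{b}=\binom{b-a}{a}Q^{b-a}\in \mathcal{F}_{0}\left[ 1\right] \cup \{0\}$; for $k>1$ write a length-$k$ monomial as $Q^{b}r$ with $r\in \mathcal{F}_{0}\left[ k-1\right] $, and then $Sq^{a}\left( Q^{0}r\right) =Q^{0}Sq^{a}\left( r\right) $ while $Sq^{a}\left( Q^{b}r\right) =\sum_{t}\binom{b-a}{a-2t}Q^{b-a+t}Sq^{t}\left( r\right) $ for $b>0$, and in either case $Sq^{t}(r)\in \mathcal{F}_{0}\left[ k-1\right] $ by induction, so each term is a length-$k$ monomial (with $b-a+t\ge 0$ on every surviving summand). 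Hence $\mathcal{F}_{0}=\bigoplus_{k\ge 0}\mathcal{F}_{0}\left[ k\right] $ is a splitting of $\mathcal{A}$-modules.

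Next I would check the instability condition $Sq^{a}x=0$ whenever $2a>|x|$, again by induction on length. For $k=1$, $2a>b$ forces $b-a<a$, hence $\binom{b-a}{a}=0$. For $x=Q^{b}r$ with $b>0$: if $b<a$ every coefficient $\binom{b-a}{\,\cdot\,}$ vanishes; if $b\ge a$, a surviving summand $\binom{b-a}{a-2t}Q^{b-a+t}Sq^{t}\left( r\right) $ of $Sq^{a}\left( Q^{b}r\right) $ requires $0\le a-2t\le b-a$ and, by the inductive hypothesis applied to $Sq^{t}\left( r\right) \neq 0$, also $2t\le |r|$, whence $2a-b\le 2t\le |r|$, i.e. $2a\le b+|r|=|x|$, contradicting the assumption. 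The case $b=0$ is immediate from $|Q^{0}r|=|r|$ and induction. Thus every element of degree $<2a$ is annihilated by $Sq^{a}$.

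Finally I would show that $\varepsilon $ and $\psi $ are $\mathcal{A}$-linear, where $\mathcal{F}_{0}\otimes \mathcal{F}_{0}$ carries the Cartan tensor action $Sq^{a}\left( u\otimes v\right) =\sum_{i+j=a}Sq^{i}u\otimes Sq^{j}v$. For $\varepsilon $ this is immediate since $Sq^{a}Q^{0}=0$ for $a>0$ and $\mathcal{F}_{0}^{(0)}=\mathbb{F}_{2}[Q^{0}]$. For $\psi $, induct on length. On a generator $Q^{b}$ one has $\psi \left( Sq^{a}Q^{b}\right) =\binom{b-a}{a}\sum_{m}Q^{b-a-m}\otimes Q^{m}$, while $\sum_{i+j=a}\left( Sq^{i}\otimes Sq^{j}\right) \psi \left( Q^{b}\right) =\sum_{i+j=a}\sum_{s}\binom{b-s-i}{i}\binom{s-j}{j}Q^{b-s-i}\otimes Q^{s-j}$ is, after setting $\alpha =b-s-i$, $\beta =s-j$ (so $\alpha +\beta =b-a$), a sum over the same tensor monomials with coefficient $\sum_{j}\binom{\alpha }{a-j}\binom{\beta }{j}=\binom{\alpha +\beta }{a}=\binom{b-a}{a}$ by the Vandermonde identity; the two agree. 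For a length-$k$ monomial $Q^{b}r$ one uses that $\psi $ is an algebra map, expands $Sq^{a}\left( Q^{b}r\right) $ by the recursion and applies the inductive identity $\psi Sq^{t}\left( r\right) =\sum_{p+q=t}\left( Sq^{p}\otimes Sq^{q}\right) \psi \left( r\right) $ on one side, and applies the recursion to each tensor factor of $\psi \left( Q^{b}\right) \psi \left( r\right) $ on the other; after reindexing, the equality is once more an instance of the Vandermonde identity (the case $b=0$ being handled identically via $Sq^{a}\left( Q^{0}r\right) =Q^{0}Sq^{a}\left( r\right) $). Combined with the instability of the previous paragraph, this exhibits $\mathcal{F}_{0}$ as an unstable coalgebra over $\mathcal{A}^{op}$.

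The only real obstacle I anticipate is organisational: the length-$k$ step of the coproduct compatibility is a triple sum, and -- if one wishes to re-verify it here -- the check that the formulas of Definition \ref{Steenrod action} satisfy the Adem relations $Sq^{a}Sq^{b}$ ($a<2b$) is of the same notation-heavy kind. Both, however, are formally identical to the classical computations behind the Nishida relations (\cite{Nishida}, \cite{May1}), so I would carry out the length inductions in full only for closure and instability and, for the Adem relations and the coproduct identity, reduce to those standard binomial identities; the genuinely new point -- that the whole action respects the length grading -- is transparent from the recursions.
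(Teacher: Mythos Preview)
Your proof is correct and far more detailed than the paper's: the paper gives no argument at all, stating only ``The next lemma follows'' immediately after the definition of the action. Your inductions on length for closure and instability are exactly the routine checks that the paper implicitly defers to the reader, and your use of the Vandermonde identity for the coproduct compatibility, together with the appeal to \cite{Nishida} and \cite{May1} for the Adem-relation check, is the standard route. There is nothing to compare beyond noting that you have supplied what the paper omits.
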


On the $\hom $ dual $\mathcal{F}_{0}\left[ k\right] ^{\ast }$ we have.

\begin{lemma}
$Sq^{a}y_{k,i}^{b}=\binom{b}{a}y_{k,i}^{b+a}$.
\end{lemma}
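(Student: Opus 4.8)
The plan is to dualize the Steenrod action on $\mathcal{F}_0[k]$ given in Definition \ref{Steenrod action} and read off its effect on the dual basis monomials, then specialize to powers of a single generator $y_{k,i}$. Recall that the left $\mathcal{A}^{op}$-action on $\mathcal{F}_0[k]$ induces a right action on the dual $\mathcal{F}_0[k]^\ast$ by $\langle Sq^a\xi,\,Q^J\rangle=\langle\xi,\,Sq^a Q^J\rangle$; since we work over $\Bbb{F}_2$ there is no sign issue, and I will phrase everything in terms of the pairing $\langle-,-\rangle$ from the proof of Theorem \ref{Basic}. So the task reduces to computing, for a monomial $Q^J$ of length $k$ with $J=\Lambda(I)+(\text{something})$, the coefficient $\langle y_{k,i}^{b+a},\,Q^J\rangle$ versus $\langle Sq^a(y_{k,i}^b),\,Q^J\rangle=\langle y_{k,i}^b,\,Sq^a Q^J\rangle$, and to verify these agree precisely when the binomial coefficient $\binom{b}{a}$ is odd.

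First I would pin down which monomials $Q^J$ pair nontrivially with $y_{k,i}^b$. By Theorem \ref{Basic} and the isomorphism $\phi$ of Lemma, $y_{k,i}^b$ is dual (up to lower-order corrections controlled by the ordering of that proof) to $Q^{b\,I(x_{k,i})}$, i.e. to the monomial $\phi(0,\dots,0,b,0,\dots,0)$ with the $b$ in slot $i$; explicitly this is $\underbrace{Q^0\cdots Q^0}_{k-i}\,Q^b\,\underbrace{Q^0\cdots Q^0}_{i-1}$. The key computational step is then to apply the Steenrod-action formulas of Definition \ref{Steenrod action} to such a monomial. Using the middle rule $Sq^a(Q^0 r)=Q^0 Sq^a(r)$ to strip off the leading $k-i$ copies of $Q^0$, and then the third rule $Sq^a(Q^b r)=\sum_t\binom{b-a}{a-2t}Q^{b-a+t}Sq^t(r)$ with $r=\underbrace{Q^0\cdots Q^0}_{i-1}$ a product of zero-degree classes on which $Sq^t$ vanishes for $t>0$ (by the first rule, $Sq^0 Q^0=Q^0$ and $Sq^t Q^0=\binom{-t}{t}Q^{-t}=0$ for $t>0$), only the $t=0$ term survives and $Sq^a$ sends $Q^b\,\underbrace{Q^0\cdots Q^0}_{i-1}$ to $\binom{b-a}{a}Q^{b-a}\,\underbrace{Q^0\cdots Q^0}_{i-1}$. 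Dualizing, $Sq^a y_{k,i}=\binom{b-a}{a}\big|_{b=i?}$—more precisely I would first settle the case $b=1$ to get $Sq^a y_{k,i}$ in terms of the $y$'s, and then bootstrap to general $b$ using the multiplicative (Cartan) structure together with the already-established coproduct on the generators (Corollary after Theorem \ref{Basic}).

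The cleanest route for the general power, and the one I would actually write, is: the right $\mathcal{A}$-action on the polynomial algebra $\mathcal{F}_0[k]^\ast=\Bbb{F}_2[y_{k,i}:1\le i\le k]$ is determined by its values on generators together with the Cartan formula $Sq^a(uv)=\sum_{p+q=a}Sq^p(u)Sq^q(v)$, which holds because $\psi$ is multiplicative; so it suffices to show $Sq^a y_{k,i}=\binom{1}{a}y_{k,i}^{1+a}$, i.e. $Sq^0 y_{k,i}=y_{k,i}$, $Sq^1 y_{k,i}=y_{k,i}^2$, and $Sq^a y_{k,i}=0$ for $a\ge 2$, and then an elementary induction with the Cartan formula upgrades this to $Sq^a y_{k,i}^b=\binom{b}{a}y_{k,i}^{b+a}$ (this is the standard fact that on a polynomial generator $x$ with $Sq^0 x=x$, $Sq^1 x=x^2$, $Sq^{\ge 2}x=0$, one has $Sq^a x^b=\binom{b}{a}x^{b+a}$, proved by induction on $b$ using $\binom{b}{a}=\binom{b-1}{a}+\binom{b-1}{a-1}$ mod $2$). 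Establishing the three base cases is where the monomial-duality bookkeeping of the previous paragraph is needed, and that is the main obstacle: one must check not only that $\langle Sq^a y_{k,i},\,Q^J\rangle$ has the claimed value on $Q^J=Q^{(1+a)I(x_{k,i})}$ but that it vanishes on every other length-$k$ monomial of the same degree, which again follows from the triangularity with respect to the total ordering established in the proof of Theorem \ref{Basic} — the Steenrod action lowers degree and, by inspection of the formulas, cannot raise the excess-profile of $J$ past that of $(1+a)I(x_{k,i})$. Once triangularity is in hand the three base-case identities are immediate binomial computations ($\binom{1-a}{a}\equiv\binom{1}{a}\bmod 2$ is $1$ for $a=0,1$ and $0$ for $a\ge 2$), completing the proof.
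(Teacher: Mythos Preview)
Your approach is essentially the same as the paper's. The paper's entire proof is the single sentence ``It follows from the action of $\mathcal{A}^{op}$ on $x_{k,i}$ and the fact that $y_{k,i}=(x_{k,i})^{\ast}$,'' and what you have written is a detailed unpacking of exactly that idea: compute the $\mathcal{A}^{op}$-action from Definition~\ref{Steenrod action} on the monomials $Q^{cI(x_{k,i})}$ (strip the leading $Q^{0}$'s, observe only $t=0$ survives on the trailing $Q^{0}$'s), dualize via the Kronecker pairing, and then pass from $b=1$ to general $b$ by the Cartan formula, which is available because the preceding lemma records that $\mathcal{F}_{0}[k]$ is an $\mathcal{A}^{op}$-coalgebra. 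The paper leaves all of this implicit; you have made it explicit.

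One small point: your second paragraph momentarily tangles the roles of $b$ and the exponent when you write ``$Sq^{a}y_{k,i}=\binom{b-a}{a}\big|_{b=i?}$,'' but you immediately recover with the cleaner Cartan-plus-induction route in the third paragraph, which is the right way to organize it. The appeal to ``triangularity'' from Theorem~\ref{Basic} to rule out cross-terms in $Sq^{a}y_{k,i}$ is somewhat loose---the paper does not spell this out either, and its one-line proof simply asserts that the computation on $x_{k,i}$ suffices---so on this point you and the paper are at the same level of detail.
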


\begin{proof}
It follows from the action of $\mathcal{A}^{op}$ on $x_{k,i}$\ and the fact
that $y_{k,i}=\left( x_{k,i}\right) ^{\ast }$. \ \bigskip
\end{proof}

We wish to examine two relatively different directions: one obtained by the
relation $Q^{0}=1$ and the second by eliminating elements of negative
excess. 
\begin{equation*}
\begin{array}{ccccc}
&  & \mathcal{F}_{0} &  &  \\ 
& \swarrow &  & \searrow &  \\ 
\mathcal{F}:= & \mathcal{F}_{0}/\left\langle Q^{0}-1\right\rangle &  & U:= & 
\mathcal{F}_{0}/\left\langle \text{negative excess}\right\rangle%
\end{array}%
\end{equation*}%
The first provides the Steenrod algebra $\mathcal{A}_{2}$ and the second the
Dyer-Lashof algebra $\mathcal{R}$. \ 

\section{The direction to the Steenrod algebra}

Let $\mathcal{F}$ be the quotient of $\mathcal{F}_{0}$\ by the ideal
generated by $Q^{0}-1$. In other words $\mathcal{F}$\ is the free
associative algebra with unit generated by $\left\{ Q^{i}\;|\;i\geq
1\right\} $\ over $%
\Bbb{F}_2%
$\ with degrees $|Q^{i}|=i$.\ $\mathcal{F}$\ is a quotient Hopf algebra of $%
\mathcal{F}_{0}$\ of finite type.

\begin{definition}
Let $\mathcal{F}\left( k\right) $ be the vector subspace generated by all
monomials $Q^{S}$\ such that $l\left( S\right) \leq k$.
\end{definition}

Now $\mathcal{F}\left( k\right) \subseteq \mathcal{F}\left( k+1\right) $\
and $\mathcal{F}=\underrightarrow{\lim }\mathcal{F}\left( k\right) $.\ \ 

\begin{lemma}
$\mathcal{F}\left( k\right) $\ is a coalgebra closed under the $\mathcal{A}%
^{op}$\ action and $\mathcal{F}$\ is an unstable coalgebra over $\mathcal{A}%
^{op}$.\ \ 
\end{lemma}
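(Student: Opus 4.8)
The plan is to deduce everything from the already–established properties of $\mathcal{F}_{0}$ by passing to the quotient $\mathcal{F}=\mathcal{F}_{0}/\langle Q^{0}-1\rangle$. The one genuine point is that the $\mathcal{A}^{op}$-action descends, i.e.\ that the ideal $\langle Q^{0}-1\rangle$ is an $\mathcal{A}^{op}$-submodule of $\mathcal{F}_{0}$. Once this is known, $\mathcal{F}$ inherits the structure of an $\mathcal{A}^{op}$-module, and the quotient map $\mathcal{F}_{0}\to\mathcal{F}$ is simultaneously a map of coalgebras and of $\mathcal{A}^{op}$-modules, so that $Sq^{0}=\mathrm{id}$, the instability bound, and the compatibility of $\psi$ with the $\mathcal{A}^{op}$-action on the tensor product are all inherited from $\mathcal{F}_{0}$.

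To see that $\langle Q^{0}-1\rangle$ is $\mathcal{A}^{op}$-stable, observe that the kernel of $\mathcal{F}_{0}\to\mathcal{F}$ is spanned by the differences $Q^{I}-Q^{\bar I}$, where $\bar I$ is obtained from $I$ by deleting its zero entries. Writing the image in $\mathcal{F}$ again with a bar, it therefore suffices to show by induction on $l(I)$ that $\overline{Sq^{a}Q^{I}}$ depends only on $\bar I$. Write $I=(i_{m},I')$. If $i_{m}=0$, then $Sq^{a}Q^{I}=Q^{0}Sq^{a}Q^{I'}$ by Definition \ref{Steenrod action}, so $\overline{Sq^{a}Q^{I}}=\overline{Sq^{a}Q^{I'}}$, which depends only on $\bar I'=\bar I$ by induction. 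If $i_{m}>0$, then $Sq^{a}Q^{I}=\sum_{t}\binom{i_{m}-a}{a-2t}Q^{i_{m}-a+t}Sq^{t}Q^{I'}$, whose image in $\mathcal{F}$ is $\sum_{t}\binom{i_{m}-a}{a-2t}Q^{i_{m}-a+t}\,\overline{Sq^{t}Q^{I'}}$, interpreting $Q^{0}$ as $1$ and $Q^{j}$ as $0$ for $j<0$; by induction this depends only on $i_{m}$, $a$, and $\bar I'$, hence only on $\bar I=(i_{m},\bar I')$. Thus the Nishida-type recursion is insensitive to the $Q^{0}$'s and the action descends.

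It remains to check the two length statements. First, $\mathcal{A}^{op}$ does not raise length on $\mathcal{F}$: the base case $Sq^{a}Q^{b}=\binom{b-a}{a}Q^{b-a}$ has length $\leq 1$, and in the recursion above the leading $Q^{i_{m}}$ is replaced by $Q^{i_{m}-a+t}$ (length unchanged when positive, length dropped by one when this is $Q^{0}=1$, term zero when negative), while $Sq^{t}Q^{I'}$ has length $\leq l(I')=l(I)-1$ by induction; hence $Sq^{a}$ carries a monomial of length $\leq k$ to a sum of monomials of length $\leq k$, so $\mathcal{F}(k)$ is closed under $\mathcal{A}^{op}$. Second, $\psi$ does not raise length: on $\mathcal{F}_{0}$ one has $\psi(Q^{I})=\sum_{I'+I''=I}Q^{I'}\otimes Q^{I''}$ with $l(I')=l(I'')=l(I)$, so after projecting to $\mathcal{F}$ (which can only delete entries) $\psi(\mathcal{F}(k))\subseteq\mathcal{F}(k)\otimes\mathcal{F}(k)$; since $\mathcal{F}(k)$ contains $1$ and is stable under the counit, it is a subcoalgebra. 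Finally $\mathcal{F}=\underrightarrow{\lim}\,\mathcal{F}(k)$ with each $\mathcal{F}(k)$ a subcoalgebra closed under $\mathcal{A}^{op}$, and the unstable $\mathcal{A}^{op}$-coalgebra axioms, inherited from $\mathcal{F}_{0}$, pass to the colimit.

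The main obstacle, as indicated, is the descent of the action: one must be certain the recursion of Definition \ref{Steenrod action} is compatible with the relation $Q^{0}=1$, i.e.\ that inserting or collapsing $Q^{0}$'s in a monomial does not change the image of $Sq^{a}$ of it in $\mathcal{F}$. Everything else is degree- and length-bookkeeping resting on the facts already recorded for $\mathcal{F}_{0}$.
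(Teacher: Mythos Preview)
The paper states this lemma without proof, evidently treating it as immediate from the analogous result for $\mathcal{F}_{0}$ (which is likewise stated without proof). Your argument supplies the routine verification the paper omits: you check that the ideal $\langle Q^{0}-1\rangle$ is $\mathcal{A}^{op}$-stable by an induction on length using the recursive definition of the action, the decisive clause being $Sq^{a}(Q^{0}r)=Q^{0}Sq^{a}(r)$, so that the action and the unstable-coalgebra axioms descend from $\mathcal{F}_{0}$ to $\mathcal{F}$; and you then confirm that neither $Sq^{a}$ nor $\psi$ can raise length in $\mathcal{F}$, so each $\mathcal{F}(k)$ is a sub-$\mathcal{A}^{op}$-coalgebra. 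This is correct and is exactly the natural way to justify the statement; the paper simply leaves it to the reader.
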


We note that $\mathcal{F}$ is isomorphic with the $\func{mod}2$ homology of
the space $\Omega \Sigma CP^{\infty }$\ and it has been studied in relation
with quasi-symmetric functions (\cite{Malvenuto}).

There exists an obvious coalgebra epimorphism \ \ \ \ 
\begin{equation*}
\mathcal{F}_{0}\left[ k\right] \twoheadrightarrow \mathcal{F}\left( k\right) 
\text{.}
\end{equation*}%
The next proposition follows.

\begin{proposition}
There exists an injection of $\mathcal{A}$-maps induced by the quotient map
above:%
\begin{equation*}
\mathcal{F}\left( k\right) ^{\ast }\rightarrowtail \mathcal{F}\left[ k\right]
^{\ast }\text{.}
\end{equation*}
\end{proposition}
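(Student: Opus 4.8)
The plan is to dualize the given coalgebra epimorphism $q\colon\mathcal{F}_{0}[k]\twoheadrightarrow\mathcal{F}(k)$ and check that the dual map $q^{\ast}\colon\mathcal{F}(k)^{\ast}\to\mathcal{F}_{0}[k]^{\ast}$ is (i) a map of $\mathcal{A}$-modules and (ii) injective. Functoriality of the $\hom$ dual turns a surjection of coalgebras into an injection of the dual algebras as soon as one knows the spaces are finite-dimensional in each degree, so the only real content is identifying exactly what $q$ is and verifying that the duality is compatible with the Steenrod action.

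First I would make $q$ explicit. The monomials of $\mathcal{F}_{0}[k]$ are the $Q^{I}$ with $l(I)=k$, while $\mathcal{F}(k)$ has basis the $Q^{S}$ with $l(S)\le k$; the quotient map $\mathcal{F}_{0}\to\mathcal{F}=\mathcal{F}_{0}/\langle Q^{0}-1\rangle$ sends $Q^{I}$ to the monomial obtained by deleting every $Q^{0}$ entry of $I$, which lands in $\mathcal{F}(k)$ since deletion only shortens the sequence. Restricted to $\mathcal{F}_{0}[k]$ this is surjective onto $\mathcal{F}(k)$ (given $S$ with $l(S)\le k$, pad with $k-l(S)$ zeros). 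It is a coalgebra map because it is the restriction of the Hopf-algebra quotient $\mathcal{F}_{0}\to\mathcal{F}$, and it is $\mathcal{A}^{op}$-linear because the Steenrod action of Definition~\ref{Steenrod action} was built to descend to $\mathcal{F}$ (the clause $Sq^{a}(Q^{0}r)=Q^{0}Sq^{a}(r)$ is precisely what makes $Q^{0}-1$ an $\mathcal{A}^{op}$-stable ideal, and this is recorded in the lemma stating $\mathcal{F}(k)$ is closed under the $\mathcal{A}^{op}$ action).

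Next I would dualize. Both $\mathcal{F}_{0}[k]$ (by Theorem~\ref{Basic}, its dual is a polynomial algebra, hence finite-dimensional in each degree) and $\mathcal{F}(k)$ (of finite type, being spanned by bounded-length monomials in $Q^{i}$, $i\ge1$) are graded vector spaces of finite type, so $q^{\ast}=\hom(q,\mathbb{F}_{2})$ is defined degreewise and $(\ker q^{\ast})=(\operatorname{coker} q)^{\ast}=0$; thus $q^{\ast}$ is injective. Dually to the fact that $q$ is a coalgebra map, $q^{\ast}$ is an algebra map; dually to $\mathcal{A}^{op}$-linearity of $q$ it is a map of $\mathcal{A}$-modules for the adjoint (i.e.\ $\mathcal{A}$, not $\mathcal{A}^{op}$) action on the duals—this is the sense in which the statement calls it ``an injection of $\mathcal{A}$-maps.'' Concretely one can record that $q^{\ast}$ identifies $\mathcal{F}(k)^{\ast}$ with the subalgebra of $\mathcal{F}_{0}[k]^{\ast}=\mathbb{F}_{2}[y_{k,i}\mid1\le i\le k]$ generated by the duals of the $Q^{0}$-free monomials.

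The main obstacle I anticipate is purely bookkeeping: pinning down that $q$ restricted to the degree-$k$-length piece is still surjective onto all of $\mathcal{F}(k)$ (not merely into it) and that no cancellation occurs mod $2$ when several $Q^{I}$ of length $k$ collapse to the same $Q^{S}$—but since the padding of $S$ by zeros to a fixed length $k$ can be done in exactly one way that keeps the ordering used to define $q$ (delete zeros), each $Q^{S}$ has a canonical preimage and surjectivity is immediate, with no mod-$2$ subtlety. Everything else is the standard exactness of $\hom(-,\mathbb{F}_{2})$ on finite-type graded vector spaces together with the already-established compatibilities of the coproduct and the $\mathcal{A}^{op}$-action with the quotient.
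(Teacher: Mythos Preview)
Your proposal is correct and follows exactly the route the paper intends: the paper gives no explicit proof beyond the phrase ``The next proposition follows,'' relying on the standard fact that dualizing a surjection of finite-type graded coalgebras over $\mathbb{F}_{2}$ yields an injection of the dual algebras, compatibly with the $\mathcal{A}^{op}$/$\mathcal{A}$ actions already established. Your write-up simply fills in those details (explicit description of $q$, finite-type check, and the $\mathcal{A}$-equivariance), and the one ``obstacle'' you raise about mod-$2$ cancellation is a non-issue since $q$ sends basis monomials to basis monomials, so surjectivity is immediate from the existence of a single padded preimage.
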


As a corollary to our result we get a result proved by Crossley.

\begin{corollary}
(\cite{Crossley1999}) $\mathcal{F}^{\ast }$ is a polynomial algebra.
\end{corollary}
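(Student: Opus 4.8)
The plan is to deduce the corollary directly from Theorem \ref{Basic} together with the preceding proposition. The key observation is that $\mathcal{F}$ is the colimit $\underrightarrow{\lim}\,\mathcal{F}(k)$ under the inclusions $\mathcal{F}(k)\hookrightarrow\mathcal{F}(k+1)$, so dualizing gives $\mathcal{F}^{\ast}=\underleftarrow{\lim}\,\mathcal{F}(k)^{\ast}$ as algebras. Thus it suffices to identify each $\mathcal{F}(k)^{\ast}$ as a polynomial algebra in a way compatible with the restriction maps, and then argue that the inverse limit of such a system is again polynomial.

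First I would make the identification $\mathcal{F}(k)^{\ast}$ explicit. The proposition gives an injection of $\mathcal{A}$-modules $\mathcal{F}(k)^{\ast}\rightarrowtail\mathcal{F}_0[k]^{\ast}$, and by Theorem \ref{Basic} the target is the polynomial algebra $\mathbb{F}_2[y_{k,i}\;|\;1\le i\le k]$. The point is that under the coalgebra epimorphism $\mathcal{F}_0[k]\twoheadrightarrow\mathcal{F}(k)$, a monomial $Q^I$ of length exactly $k$ maps to the monomial $Q^S$ obtained by deleting all $Q^0$'s; so the dual injection identifies $\mathcal{F}(k)^{\ast}$ with the subalgebra of $\mathcal{F}_0[k]^{\ast}$ on which the duals of monomials with "too many" $Q^0$'s in the wrong positions vanish — concretely, I would check that $\mathcal{F}(k)^{\ast}$ is precisely the polynomial subalgebra $\mathbb{F}_2[y_{k,i}\;|\;1\le i\le k]$ itself once one restricts to the image, i.e. that the dimension count of $\mathcal{F}(k)$ in each degree matches that of a polynomial algebra on $k$ generators in degrees $1,2,\dots,k$. (Indeed $\mathcal{F}(k)$ has a basis of monomials $Q^{s_k}\cdots Q^{s_1}$ with $s_j\ge 1$ and $j\le k$, padded by identities, which is in bijection with monomials in $k$ commuting variables of degrees $1,\dots,k$.)

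Next I would examine the restriction map $\mathcal{F}(k+1)^{\ast}\to\mathcal{F}(k)^{\ast}$ dual to the inclusion. Under the identifications above this should be the map sending one polynomial generator to zero and the others to the corresponding generators — precisely the behaviour of $\Phi^{\ast}$ recorded in the proof of Theorem \ref{Basic}, where $\Phi^{\ast}(y_{k+1,i+1})=y_{k,i}$ and $\Phi^{\ast}(y_{k+1,1})=0$, although here one must use the variant $\varphi_k$ of (\ref{direct F_0}) that appends $Q^0$ on the right; I would verify the analogous formula holds (up to reindexing the generator that is killed). Granting this, the inverse system $\{\mathbb{F}_2[y_{k,i}\;:\;1\le i\le k]\}$ has surjective transition maps that kill exactly one generator at each stage and are isomorphisms on the remaining ones, so the inverse limit is the polynomial algebra $\mathbb{F}_2[y_i\;|\;i\ge 1]$ on countably many generators $y_i=\{y_{k,i}\}_k$ with $|y_i|=i$. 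This gives $\mathcal{F}^{\ast}\cong\mathbb{F}_2[y_i\;|\;i\ge 1]$, a polynomial algebra.

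The main obstacle is the bookkeeping in the second and third steps: correctly tracking which polynomial generator of $\mathcal{F}_0[k]^{\ast}$ survives to $\mathcal{F}(k)^{\ast}$ and how the indices shift under $\varphi_k$ versus $\Phi_k$, so that the transition maps in the inverse system really are "project off one generator" maps. Once that is pinned down, the conclusion is formal, since a filtered inverse limit of polynomial algebras along such split surjections is itself polynomial. I would also remark that the generators $y_i$ are dual to the classes $Q^i$ (the length-one basis elements of $\mathcal{F}$), which makes the polynomial structure of $\mathcal{F}^{\ast}$ transparent and matches Crossley's description.
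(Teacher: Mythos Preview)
Your argument contains a genuine error in the dimension count. You assert that the dimension of $\mathcal{F}(k)$ in each degree matches that of a polynomial algebra on $k$ generators in degrees $1,\dots,k$, and hence that $\mathcal{F}^{\ast}\cong\mathbb{F}_2[y_i\;|\;i\ge 1]$ with exactly one generator in every positive degree. This is false. The algebra $\mathcal{F}$ is the \emph{free associative} (non-commutative) algebra on $\{Q^i:i\ge 1\}$, so a basis for $\mathcal{F}$ in degree $n$ is indexed by \emph{compositions} of $n$, of which there are $2^{n-1}$; by contrast, $\mathbb{F}_2[y_i:i\ge 1]$ has dimension $p(n)$ (the number of partitions of $n$) in degree $n$. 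Already in degree $3$ one has $\dim\mathcal{F}_{(3)}=4$ while $p(3)=3$. Similarly, for $k=2$ and degree $3$, $\mathcal{F}(2)$ has basis $\{Q^3,\,Q^2Q^1,\,Q^1Q^2\}$ of dimension $3$, whereas $\mathbb{F}_2[y_1,y_2]$ has only $y_1^3$ and $y_1y_2$ in degree $3$. So $\mathcal{F}(k)^{\ast}$ is \emph{not} the polynomial ring on $y_{k,1},\dots,y_{k,k}$, the transition maps in your inverse system are not ``kill one generator'' maps, and the final identification $\mathcal{F}^{\ast}\cong\mathbb{F}_2[y_i:i\ge 1]$ fails.

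What the proposition and Theorem \ref{Basic} actually give you is an embedding of $\mathcal{F}(k)^{\ast}$ as a \emph{subalgebra} of the polynomial ring $\mathcal{F}_0[k]^{\ast}$; this shows $\mathcal{F}^{\ast}$ is reduced (no nilpotents), but a subalgebra of a polynomial algebra need not itself be polynomial, so a further structural input is required. The paper does not spell out the remaining step; it records that Crossley's own proof invokes the Borel--Hopf theorem, and notes afterwards that the number of polynomial generators of $\mathcal{F}^{\ast}$ in each degree was computed by Crossley following Gelfand et al.\ --- in particular there are \emph{several} generators in most degrees (e.g.\ two in degree $3$), not one. If you want to make the corollary self-contained in this framework, you should either invoke Borel's structure theorem for connected commutative graded Hopf algebras over $\mathbb{F}_2$ with no nilpotents, or produce explicitly a larger family of algebraically independent elements of $\mathcal{F}^{\ast}$ whose Hilbert series matches $(1-t)/(1-2t)$.
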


The proof of Crossley uses the Borel-Hopf theorem. It follows that $\mathcal{%
F}$ is a copolynomial algebra. Moreover, Crossley following work of Gelfand
et all (\cite{Gelfand}) computed the number of generators in each degree of
the algebra $\mathcal{F}^{\ast }$.

Let $\mathcal{A}_{2}$ be the quotient of $\mathcal{F}$ by the ideal $I\left(
A\right) $ generated by the elements known as the \textbf{Adem relations }in
cohomology 
\begin{equation*}
Q^{i}Q^{j}-\sum\limits_{0}^{[i/2]}\binom{j-k-1}{i-2k}Q^{i+j-k}Q^{k}\text{, }i%
\text{ and }j>0\text{ such that }i<2j\text{.}
\end{equation*}%
We recall that a monomial $Q^{S}=Q^{s_{k}}...Q^{s_{1}}$ is called admissible
in $\mathcal{A}_{2}$, if $s_{k}\geq 2s_{k-1}$, $\cdots $, $s_{2}\geq 2s_{1}$%
. \ \ 

It is obvious that $\mathcal{A}_{2}$\ is isomorphic with the Steenrod
algebra as Hopf algebras \cite{Steenrod}.\ \ \ 

We admit that it is unorthodox to use the symbols $Q^{i}$ for the Steenrod
algebra, but we would like to distinguishing between the action of the
opposite of the Steenrod algebra on these algebras.\ 

\begin{remark}
$\mathcal{A}_{2}=\mathcal{F}/I\left( A\right) $\ is not closed under the $%
\mathcal{A}^{op}$\ action.
\end{remark}

We examine the action of $Sq_{\ast }^{2}$ on $Q^{3}Q^{2}$\ before and after
applying the Adem relations.

\begin{eqnarray*}
Sq_{\ast }^{2}Q^{3}Q^{2} &=&\binom{3-2}{2-2\cdot 1}Q^{3-2+1}Sq_{\ast
}^{1}Q^{2}=Q^{2}Q^{1}\text{but} \\
Q^{3}Q^{2} &=&0\text{, because of Adem relations.}
\end{eqnarray*}

According to the Adem relations, $Q^{1}Q^{2}=Q^{3}Q^{0}=Q^{3}$. Hence after
applying the iterated coproduct on a monomial $Q^{I}$\ of length $k$, it may
be the case that a summand turns out to be of length less than $k$ after
applying the Adem relations. So we define $\mathcal{A}_{2}\left( k\right) $
to be the vector space spanned by admissible monomials of length at most $k$%
. Thus $\mathcal{A}_{2}\left( k\right) \hookrightarrow \mathcal{A}_{2}\left(
k+1\right) $ and $\mathcal{A}_{2}=\underrightarrow{\lim }\mathcal{A}%
_{2}\left( k\right) $. Moreover, we have the following obvious coalgebra
quotient maps:\ 
\begin{equation}
\mathcal{F}_{0}\left[ k\right] \twoheadrightarrow \mathcal{F}\left( k\right)
\twoheadrightarrow \mathcal{A}_{2}\left( k\right) \text{ and }%
\underrightarrow{\lim }\mathcal{F}_{0}\left[ k\right] \twoheadrightarrow 
\mathcal{F}\twoheadrightarrow \mathcal{A}_{2}\text{.}
\label{direct F_0 F A_2}
\end{equation}%
\ \ \ 

Following Milnor we describe the primitive monomials of positive degree for
the convenience of the reader. \ \ \ 

\begin{theorem}
a) A Milnor basis for primitive elements of $\mathcal{A}_{2}$\ is given by%
\begin{equation*}
\left\{ Q^{S_{n+1,n+1}}=Q^{2^{n}}\cdots Q^{2}Q^{1}\;|\;0\leq n\right\} \text{%
.}
\end{equation*}

b) Let $M\mathcal{A}_{2}\left( k\right) $ be the subset of $\mathcal{A}%
_{2}\left( k\right) $\ consisting of all admissible monomials on $\mathcal{A}%
_{2}\left( k\right) $. Then the map $f:%
\mathbb{N}
^{k}\rightarrow M\mathcal{A}_{2}\left( k\right) $ given by 
\begin{equation*}
f\left( n_{1},\cdots ,n_{k}\right) =Q^{\Sigma n_{i}S_{i,i}}
\end{equation*}%
is an isomorphism of sets. Moreover, $f^{-1}:M\mathcal{A}_{2}\left( k\right)
\rightarrow 
\mathbb{N}
^{k}$ is given by 
\begin{equation*}
f^{-1}\left( Q^{s_{k}}...Q^{s_{1}}\right) =\left( n_{1},\cdots ,n_{k}\right) 
\text{ such that }n_{k-t}=s_{t+1}-2s_{t}-\cdots -2^{t}s_{1}\text{.\ }
\end{equation*}%
Here $0\leq t\leq k-1$.

c) If $Q^{S}$\ is not admissible in $\mathcal{A}_{2}\left( k\right) $ and $%
Q^{S}=\tsum a_{J}Q^{J}$\ after applying the Adem relations, then $%
a_{J}\equiv 1\func{mod}2$\ implies $J<S$.

\begin{proof}
This is a bookkeeping application. \ \ 
\end{proof}
\end{theorem}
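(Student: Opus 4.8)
The plan is to read all three parts as bookkeeping sitting on top of Milnor's structure theorem for $\mathcal{A}_{2}$ and on the compatibility of the Adem relations with the total ordering introduced above. For part (a) I would start from Milnor's theorem that $\mathcal{A}_{2}^{\ast }$ is the polynomial algebra on generators $\xi _{m}$ in the pairwise distinct degrees $2^{m}-1$, $m\geq 1$. By the Milnor--Moore theorem the primitives of $\mathcal{A}_{2}$ are dual to the indecomposables of $\mathcal{A}_{2}^{\ast }$, which are one--dimensional in the degrees $1,3,7,\dots$ and zero in every other positive degree; hence the module of primitives of $\mathcal{A}_{2}$ is one--dimensional in degrees $2^{n+1}-1$ and zero otherwise. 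It then remains to locate the generator inside the admissible basis, and here I would use the elementary fact that the only admissible monomial of degree $2^{n+1}-1$ whose length attains the maximum $n+1$ is $Q^{2^{n}}\cdots Q^{2}Q^{1}$: computing the reduced coproduct of the (unique up to scalar) primitive $Q^{S_{n+1,n+1}}$ from the Cartan--type formula $\psi (Q^{i})=\sum _{t}Q^{i-t}\otimes Q^{t}$ inherited from $\mathcal{F}_{0}$, together with the Adem relations, shows that $Q^{2^{n}}\cdots Q^{1}$ occurs in it with coefficient $1$ while every other admissible summand has strictly smaller length, so it is the largest admissible monomial appearing in the primitive with respect to the ordering. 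Equivalently one pins this down by a Kronecker pairing against the monomial basis of $\mathcal{A}_{2}^{\ast }$. This is exactly Milnor's original calculation, carried out on the chosen representatives.

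For part (b) I would make the change of coordinates explicit. Writing $Q^{S}=Q^{s_{k}}\cdots Q^{s_{1}}$ with $S=\sum _{i}n_{i}S_{i,i}$ and reading off the entries of $S$ from the definition of the sequences $S_{i,i}$, one obtains a lower--triangular, integrally invertible linear relation between $(s_{k},\dots ,s_{1})$ and $(n_{1},\dots ,n_{k})$ whose solved form is the map $f^{-1}$ of the statement. Its immediate consequence is that the consecutive differences $s_{t+1}-2s_{t}$ are among the non--negative integers $n_{i}$, so $Q^{S}$ is admissible for every $n\in \mathbb{N}^{k}$ and $f$ is well defined with image in $M\mathcal{A}_{2}(k)$; conversely, on an admissible monomial $f^{-1}$ returns non--negative integers precisely because admissibility is the non--negativity of those same differences, so $f^{-1}$ lands in $\mathbb{N}^{k}$, and $f\circ f^{-1}$, $f^{-1}\circ f$ are the identity by invertibility of the coordinate change. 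This part is routine linear algebra once the entrywise formulas are written down, and I do not expect any obstacle in it.

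Part (c) is where the ordering does the real work and where I expect the only genuine point of care. Given a non--admissible $Q^{S}$, I would pick any adjacent descent, an index $t$ with $s_{t+1}<2s_{t}$ (both exponents being positive, since $Q^{0}=1$ in $\mathcal{A}_{2}$), and apply the single Adem relation there, replacing the sub--word $Q^{s_{t+1}}Q^{s_{t}}$ by $\sum _{c}\binom{s_{t}-c-1}{s_{t+1}-2c}Q^{s_{t+1}+s_{t}-c}Q^{c}$ with $0\leq c\leq \lbrack s_{t+1}/2]<s_{t}$. The key observation is that this substitution leaves the suffixes $I_{1},\dots ,I_{t-1}$ of the word unchanged and alters only its $t$--th entry from the right, from $s_{t}$ down to $c<s_{t}$, so it strictly decreases $e(I_{t})$; by the definition of the ordering every monomial produced is therefore strictly smaller than $Q^{S}$, and if $c=0$ its length even drops. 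New descents may be created elsewhere, but degree and length are preserved throughout, the set of monomials of a fixed degree and length at most $k$ is finite, and the ordering restricted to it is a finite total order; hence iterating the reduction terminates, necessarily at admissible monomials, and by transitivity every admissible $Q^{J}$ that survives with odd coefficient satisfies $J<S$. The hard part, such as it is, is precisely this bookkeeping --- verifying that an Adem relation applied in the interior of a word perturbs only the suffix $I_{t}$ and strictly lowers $e(I_{t})$ --- after which termination and the conclusion are immediate. This is what the author's one--line ``bookkeeping application'' is pointing at.
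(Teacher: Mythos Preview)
Your proposal is correct and fills in precisely the bookkeeping the paper's one-line proof gestures at: deferring to Milnor for (a), the invertible triangular change of variables for (b), and for (c) the key observation that a single Adem rewrite at position $(t{+}1,t)$ fixes the suffixes $I_1,\dots,I_{t-1}$ while strictly lowering $e(I_t)$ (or drops the length when $c=0$), so iteration terminates by finiteness of monomials of fixed degree and bounded length. The only caution is that within this paper Milnor's theorem is stated as a \emph{later} corollary whose very formulation already names the $Q^{S_{n+1,n+1}}$ as dual generators, so for (a) one must cite Milnor's 1958 paper directly rather than the paper's own corollary to avoid circularity---which is exactly what you do, and what the paper's phrase ``Following Milnor'' intends.
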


As a corollary to theorem \ref{Basic} we obtain Milnor's result that the
Steenrod algebra is copolynomial.

\begin{theorem}
\cite{Milnor 1958} $\mathcal{A}^{\ast }$\ is a polynomial algebra and a set
of generators is given by%
\begin{equation*}
\left\{ \xi _{n}:=\left( Q^{S_{n+1,n+1}}\right) ^{\ast }\;|\;0\leq n\right\} 
\text{.}
\end{equation*}
\end{theorem}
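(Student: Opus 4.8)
The plan is to deduce Milnor's theorem from Theorem~\ref{Basic} together with the diagram of coalgebra surjections in \eqref{direct F_0 F A_2}. First I would dualize the quotient map $\mathcal{F}_0[k]\twoheadrightarrow \mathcal{A}_2(k)$ to get an injection of algebras $\mathcal{A}_2(k)^{\ast}\rightarrowtail \mathcal{F}_0[k]^{\ast}$, and observe that under this injection the dual class $\left(Q^{S_{i,i}}\right)^{\ast}\in\mathcal{A}_2(k)^{\ast}$ is carried to (a sum starting with) $y_{k,i}$; this uses part~(b) of the preceding theorem, which identifies $M\mathcal{A}_2(k)$ with $\mathbb{N}^k$ via the exponents $(n_1,\dots,n_k)$ of the generating admissibles $Q^{S_{i,i}}$, exactly paralleling the set isomorphism $\phi$ of Lemma~3 used in the proof of Theorem~\ref{Basic}. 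The upshot is that $\mathcal{A}_2(k)^{\ast}$ sits inside the polynomial algebra $\mathbb{F}_2[y_{k,1},\dots,y_{k,k}]$ as the subalgebra generated by suitable elements; a dimension count in each degree, using part~(c) of the preceding theorem (which guarantees that passing to admissibles is "upper triangular" with respect to the ordering $<$, so no collapse occurs), shows this inclusion is an equality, i.e. $\mathcal{A}_2(k)^{\ast}=\mathbb{F}_2[\bar y_{k,1},\dots,\bar y_{k,k}]$ is again polynomial.

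Next I would pass to the limit. From $\mathcal{A}_2=\underrightarrow{\lim}\,\mathcal{A}_2(k)$ and the compatibility of the maps $\Phi_k$ (resp.\ $\varphi_k$) with the Steenrod structure, dualizing gives $\mathcal{A}^{\ast}=\underleftarrow{\lim}\,\mathcal{A}_2(k)^{\ast}$, an inverse limit of polynomial algebras under the projections $\Phi_k^{\ast}$, which by the formula in the proof of Theorem~\ref{Basic} send $y_{k+1,i+1}\mapsto y_{k,i}$ and $y_{k+1,1}\mapsto 0$. Under this transition, a generator of $\mathcal{A}_2(k)^{\ast}$ corresponding to the primitive $Q^{S_{n+1,n+1}}=Q^{2^n}\cdots Q^2Q^1$ (which lives in length exactly $n+1$, by part~(a)) is stable: it is hit from every larger stage, so it determines a well-defined element $\xi_n\in\mathcal{A}^{\ast}$, while the "unstable" generators (those killed by $\Phi_k^{\ast}$) die in the limit. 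Thus the inverse limit of polynomial algebras is again polynomial, on the classes $\xi_n$, $n\ge 0$.

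Finally I would verify that the $\xi_n$ are algebraically independent and generate: independence follows because their images at each finite stage $k>n$ are among the polynomial generators $y_{k,i}$ of $\mathcal{A}_2(k)^{\ast}$, and surjectivity because every admissible monomial of $\mathcal{A}_2$ is a product $Q^{\Sigma n_i S_{i,i}}$ by part~(b), so its dual is the corresponding monomial in the $\xi_n$'s up to lower-order (in $<$) terms handled by part~(c). Combining, $\mathcal{A}^{\ast}=\mathbb{F}_2[\xi_0,\xi_1,\dots]$ with $\xi_n=\left(Q^{S_{n+1,n+1}}\right)^{\ast}$.

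The main obstacle I expect is the bookkeeping at the transition from $\mathcal{F}_0[k]^{\ast}$ to $\mathcal{A}_2(k)^{\ast}$: one must check that the Adem relations, after dualizing, do not introduce relations among the would-be polynomial generators, i.e. that the images $\bar y_{k,i}$ of the $y_{k,i}$ remain algebraically independent in $\mathcal{A}_2(k)^{\ast}$. This is precisely where part~(c) of the preceding theorem does the work — it says the change of basis from all monomials $Q^S$ to admissibles is triangular with respect to $<$ — but making the degree-wise dimension count match (so that the injection $\mathcal{A}_2(k)^{\ast}\hookrightarrow\mathcal{F}_0[k]^{\ast}$ identifies $\mathcal{A}_2(k)^{\ast}$ with the polynomial subalgebra on the $k$ distinguished generators rather than something smaller) requires care, exactly as in the final paragraph of the proof of Theorem~\ref{Basic}.
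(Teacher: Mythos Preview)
Your overall strategy --- deduce Milnor's theorem by repeating the argument of Theorem~\ref{Basic} for $\mathcal{A}_2(k)$, with parts (a), (b), (c) of the preceding theorem playing the roles of the primitive--element lemma, the set bijection $\phi$, and the triangularity step --- is exactly what the paper intends when it says this is ``a corollary to Theorem~\ref{Basic}.'' The paper gives no further details; the point is that the \emph{proof method} transports, not that the statement is a formal consequence.

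Where your write-up goes wrong is in trying to realize this parallel \emph{through} the algebra injection $\mathcal{A}_2(k)^{\ast}\rightarrowtail \mathcal{F}_0[k]^{\ast}$. Your claim that $\bigl(Q^{S_{i,i}}\bigr)^{\ast}$ is carried to ``a sum starting with $y_{k,i}$'' fails on degree grounds: $y_{k,i}=\left(x_{k,i}\right)^{\ast}$ has degree $1$, whereas $Q^{S_{i,i}}=Q^{2^{i-1}}\cdots Q^{2}Q^{1}$ has degree $2^{i}-1$. (Even for $i=1$ the image is $\sum_{j=1}^{k}y_{k,j}$, not $y_{k,1}$, since every $x_{k,j}$ maps to $Q^{1}$ under $\mathcal{F}_0[k]\twoheadrightarrow\mathcal{A}_2(k)$.) Consequently your dimension count cannot show that the injection $\mathcal{A}_2(k)^{\ast}\hookrightarrow\mathcal{F}_0[k]^{\ast}$ is an equality --- indeed it is \emph{never} an equality, since $\mathcal{A}_2(k)$ is a proper quotient of $\mathcal{F}_0[k]$, so the two duals have different dimensions in each positive degree.

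The fix is to abandon the injection and argue \emph{internally} to $\mathcal{A}_2(k)$, exactly imitating the proof of Theorem~\ref{Basic}: set $\xi_{k,i}:=\bigl(Q^{S_{i,i}}\bigr)^{\ast}\in\mathcal{A}_2(k)^{\ast}$ and show, via the iterated coproduct and the ordering $<$, that $\bigl\langle \prod_i \xi_{k,i}^{n_i},\,Q^{J}\bigr\rangle\equiv 1$ forces $J\geq \sum n_i S_{i,i}$. Part~(c) supplies the triangularity needed to reduce from an arbitrary $Q^{J}$ to an admissible one, and part~(b) then gives the dimension match between monomials in the $\xi_{k,i}$ and the admissible basis $M\mathcal{A}_2(k)$. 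Passing to the limit in $k$ is then the straightforward finite--type statement $\mathcal{A}_2^{\ast}=\varinjlim\mathcal{A}_2(k)$ dualized; the formula $\Phi_k^{\ast}(y_{k+1,i+1})=y_{k,i}$ belongs to $\mathcal{F}_0$ and is not needed here.
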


A special case of a copolynomial algebra is a bipolynomial algebra which is
a graded connected bicommutative Hopf algebra such that both it and its dual
are polynomial algebras. Husemoller (\cite{Husemoller}) studied a universal
construction of this kind of algebras and Ravenel and Wilson (\cite%
{Ravenel-Wilson}) proved that being bipolynomial of finite type over $%
\Bbb{F}_2%
$ determines the Hopf algebra structure.

For completeness we quote Ravenel and Wilson 's result.

\begin{theorem}
(\cite{Ravenel-Wilson}) $\mathcal{P}$ is bipolynomial isomorphic to its dual
as Hopf algebras.\ \ 
\end{theorem}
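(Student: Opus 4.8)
The plan is to exhibit $\mathcal{P}$ as a bicommutative, graded, connected Hopf algebra of finite type over $\mathbb{F}_2$ which is polynomial on one generator in each positive degree $2^n$ (equivalently, Husemoller's universal example in that list of degrees), and then to invoke the classification of such objects. First I would recall that a bipolynomial Hopf algebra of finite type over $\mathbb{F}_2$ is, by Borel's theorem applied to both $\mathcal{P}$ and $\mathcal{P}^{\ast}$, polynomial on generators whose degrees are matched between the two sides; since the Hopf pairing forces the generator degrees of $\mathcal{P}^{\ast}$ to coincide with those of $\mathcal{P}$, the degree-sequence of generators is an intrinsic invariant. The key structural input is then that over $\mathbb{F}_2$ a bicommutative Hopf algebra that is simultaneously polynomial and copolynomial is rigidly determined by this degree sequence: the coproduct on a degree-$2^n$ generator is forced, up to change of polynomial generators, by bicommutativity together with the requirement that the dual coproduct also be polynomial. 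This is the content of Husemoller's universal construction, so the first block of the proof is to quote it.

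Next I would carry out the identification of the self-duality map explicitly. Let $a_n \in \mathcal{P}$ be the polynomial generator in degree $2^n$ and let $b_n \in \mathcal{P}^{\ast}$ be its Kronecker dual, chosen so that $\langle b_n, a_n\rangle = 1$ and $\langle b_n, (\text{decomposables})\rangle = 0$. The claim is that the assignment $a_n \mapsto b_n$ extends to a Hopf algebra isomorphism. One checks this by comparing coproducts: on both sides the reduced coproduct of the degree-$2^n$ generator is the unique primitive-free bicommutative expression in lower generators allowed by the finite-type polynomial/copolynomial constraint (the only admissible shape is the "truncated-polynomial-style" $\bar\psi(a_n) = \sum_{0<i<n} a_{n-i}^{2^i}\otimes a_i$ up to reindexing, exactly as for $H_*(\mathbb{Z}\times BU;\mathbb{F}_2)$ or for the dual Steenrod algebra before imposing extra relations). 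Since the dual coproduct encodes the product, and the product on each side is free polynomial, matching the coproduct shape on generators is enough to produce the isomorphism; compatibility with units, counits, and augmentations is automatic in this graded connected setting.

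The main obstacle, and the step I would spend real effort on, is justifying the rigidity claim: that over $\mathbb{F}_2$ there is, up to isomorphism, only one bipolynomial Hopf algebra with one generator in each degree $2^n$ and none elsewhere. This is where "being bipolynomial of finite type over $\mathbb{F}_2$ determines the Hopf algebra structure" is doing the work, and the honest proof is an inductive analysis degree by degree: assume the structure is pinned down below degree $2^n$, write the most general bicommutative coproduct on the new generator $a_n$ as $a_n\otimes 1 + 1\otimes a_n + \sum c_{\alpha,\beta}\, m_\alpha\otimes m_\beta$ over monomial pairs of complementary degree $2^n$, then impose that the dual algebra remains polynomial (no nonzero squares forced among dual generators, no unexpected relations). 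Over $\mathbb{F}_2$ the Frobenius/squaring ambiguities collapse and a counting argument on dimensions in degree $2^n$ in $\mathcal{P}$ versus $\mathcal{P}^{\ast}$ — both equal to the number of partitions of $2^n$ into the allowed parts — forces the coefficients to the unique admissible normalization after a triangular change of generators. I would present this induction, note that the base case $n=0$ is the polynomial algebra on a single degree-$1$ generator (trivially self-dual), and remark that all the verifications are the same bookkeeping that appears in Theorem~\ref{Basic} and in Milnor's computation already quoted above, so the argument can be compressed by citing those.
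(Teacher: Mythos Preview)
The paper gives no proof here: the theorem is quoted from Ravenel--Wilson as a known result, with the paper's only input being the remark (stated as a corollary to Theorem~\ref{Basic}) that $\mathcal{P}$ is bipolynomial. Your high-level strategy---establish that $\mathcal{P}$ is bipolynomial, then invoke the Ravenel--Wilson rigidity---is therefore exactly what the paper does implicitly, and at that level there is nothing to correct.

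However, the explicit portion of your plan rests on a misidentification of the object. In this paper $\mathcal{P}=\mathbb{F}_2[Q^i\mid i\geq 1]$ with $|Q^i|=i$, the commutative quotient of $\mathcal{F}$; it has one polynomial generator in \emph{every} positive degree, not only in degrees $2^n$. The coproduct inherited from $\mathcal{F}_0$ is $\bar\psi(Q^n)=\sum_{0<i<n}Q^{n-i}\otimes Q^i$, i.e.\ the $H_*(BU;\mathbb{F}_2)$-type coproduct you mention in passing, and \emph{not} the Milnor-type formula $\bar\psi(a_n)=\sum_{0<i<n}a_{n-i}^{2^i}\otimes a_i$ that you actually write down. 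Your inductive rigidity sketch and your explicit self-duality map are both tuned to the sparse degree sequence $\{2^n\}$ with Frobenius-type coproduct terms; neither feature is present in $\mathcal{P}$, so those paragraphs would have to be discarded and rewritten for the correct degree sequence $\{1,2,3,\ldots\}$. With the right object in hand, the self-duality is the classical symmetric-function (equivalently $H_*(BU)$) self-duality, and the Ravenel--Wilson classification disposes of it with no further work---which is precisely why the paper simply cites the result rather than arguing it.
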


All the above have applications on the $\Omega $ spectrum for Brown-Peterson
cohomology.

Next, let $\mathcal{P}$\ denote the polynomial algebra $%
\Bbb{F}_2%
\left[ Q^{i}\;|\;1\leq i\right] $ given as a quotient of $\mathcal{F}$\ by
the commutative relation. As a corollary to our main Theorem we obtain that $%
\mathcal{P}$ is a bipolynomial Hopf algebra.

A basis for the primitive elements for $\mathcal{P}$\ using Newton's
polynomials and an idea described by May (\cite{C-L-M})\ can be found in
Wellington's book (\cite{Wellington}). We note that Madsen (\cite{Madsen})
used that set to provide a basis for the primitive elements of $H_{\ast
}\left( Q_{0}S^{0}\right) $.

We also note that $\mathcal{P}$\ is isomorphic to a Hopf subalgebra of $%
H_{\ast }\left( Q_{0}S^{0}\right) $ and the action of $\mathcal{A}^{op}$\ on
its primitive elements is known (\cite{Wellington}).

\section{The direction to the Dyer-Lashof algebra\ }

Next we proceed to the other direction, namely eliminating elements of
negative excess.

\begin{definition}
Let $\mathcal{U}$\ be the quotient of $\mathcal{F}_{0}$\ by the ideal
generated by elements of negative excess.
\end{definition}

$\mathcal{U}$ is a quotient Hopf algebra of $\mathcal{F}_{0}$\ not connected
neither of finite type. As before $\mathcal{U}\left[ k\right] $\ stands for
the subspace generated by monomials of fixed length $k$.

We recall that each admissible sequence admits a unique decomposition with
respect to the primitive monomial elements, therefore we repeat the
appropriate results from \cite{Kech2} for the convenience of the reader.

\begin{definition}
Let $t$\ be a natural number such that $1\leq t\leq k$. \newline
Define $J_{k,t}=\left( \underset{t}{\underbrace{2^{t-1},...,2,1,1}},\underset%
{k-t}{\underbrace{0,...,0}}\right) $. The degree of $J_{k,t}$\ is $2^{t}$.\ 
\end{definition}

\begin{proposition}
\cite{Kech2}Let $Q^{J}$ be an\ admissible monomial of length $k$\ and of
positive degree. Then there exist natural numbers $\left( a_{1},a_{1},\cdots
,a_{k}\right) $\ such that $J$\ can be written uniquely as\ \ \ \ 
\begin{equation*}
J=\tsum\limits_{t=1}^{k}a_{t}J_{k,t}\text{. }
\end{equation*}
\end{proposition}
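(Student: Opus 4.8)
The plan is to prove existence and uniqueness of the decomposition $J=\sum_{t=1}^{k}a_tJ_{k,t}$ by a direct linear-algebra argument, converting the admissibility inequalities into an explicit triangular change of coordinates. First I would spell out the admissibility condition for $Q^J$ with $J=(j_k,\dots,j_1)$ of positive degree: $j_{s+1}\ge 2j_s$ for $1\le s\le k-1$, together with $j_1\ge 1$ (positive degree forces the bottom entry to be nonzero once we are in $\mathcal U$, since negative-excess monomials have been killed). The idea is that the sequences $J_{k,t}=(2^{t-1},\dots,2,1,1,0,\dots,0)$ (with $t$ leading nonzero entries, the last two of which are $1,1$) form a basis, as $t$ ranges over $1,\dots,k$, for a sublattice of $\mathbb N^k$, and that admissible sequences are precisely the nonnegative-integer combinations of them.

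The key computation I would carry out is to read off the coefficients $a_t$ from $J$ explicitly. Writing $J=\sum_{t=1}^k a_tJ_{k,t}$ and comparing entries from the bottom up: the first entry $j_1$ of $J$ equals $\sum_{t=1}^k a_t$ (every $J_{k,t}$ has a $1$ in the first slot), the second entry $j_2$ equals $\sum_{t=2}^k a_t$ (only $J_{k,t}$ with $t\ge 2$ contributes, and it contributes $1$), and in general the $(s+1)$-st entry $j_{s+1}$ equals $\sum_{t=s+1}^k a_t\,2^{s}$ for $1\le s\le k-1$ — wait, one must be careful: $J_{k,t}$ has $(s{+}1)$-st entry equal to $2^{s-?}$; I would pin this down from the definition $J_{k,t}=(2^{t-1},\dots,2,1,1,0,\dots,0)$, giving its $m$-th entry (counting from the right, $m=1,\dots,k$) as $2^{m-2}$ for $2\le m\le t$, as $1$ for $m=1$, and as $0$ for $m>t$. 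This makes the system lower-triangular in the $a_t$'s after dividing out the appropriate powers of $2$, so it inverts uniquely over $\mathbb Q$, and the solution is
\begin{equation*}
a_k=j_k-2j_{k-1},\quad a_{k-1}=j_{k-1}-2j_{k-2},\quad\dots,\quad a_2=j_2-2j_1,\quad a_1=2j_1-j_2 .
\end{equation*}
Admissibility $j_{s+1}\ge 2j_s$ is exactly the statement that $a_2,\dots,a_k\ge 0$, and $a_1=2j_1-j_2\ge 0$ is the remaining inequality (for $k=1$ one has simply $a_1=j_1$, the degree being positive). Thus the $a_t$ are well-defined natural numbers, proving both existence (the formula reconstructs $J$) and uniqueness (the formula is forced).

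The main obstacle, such as it is, is purely bookkeeping: getting the indexing of $J_{k,t}$ and of the entries of $J$ consistent, and checking the boundary entry — the second-from-bottom slot, where $J_{k,t}$ contributes $1$ rather than a power of $2$, so that the $a_1$ term does not simply drop out but instead produces the extra relation $a_1+a_2+\cdots+a_k=j_1$ alongside $a_2+\cdots+a_k=j_2/1$. Once these are matched up the triangular inversion is immediate and the nonnegativity translates term-by-term into admissibility. I would also remark that this is precisely the decomposition recorded in \cite{Kech2}, so the proof here is included only for completeness; no genuinely new difficulty arises.
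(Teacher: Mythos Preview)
The paper does not supply its own proof here; the proposition is quoted from \cite{Kech2}. Your strategy --- invert a triangular linear system and read non-negativity of the coefficients off as the admissibility condition --- is the natural one and would succeed, but the execution rests on the wrong notion of admissibility and therefore breaks down.

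You take ``admissible'' to mean Steenrod-admissible, $j_{s+1}\ge 2j_s$. The proposition, however, concerns $\mathcal{U}=\mathcal{F}_0/\langle\text{negative excess}\rangle$, where no Adem relations have been imposed; the relevant condition for $Q^{J}\neq 0$ is that every tail have non-negative excess, namely $j_m\ge j_{m-1}+\cdots+j_1$ for each $m$. The basic sequences $J_{k,t}$ themselves violate the Steenrod inequality (two consecutive entries equal to $1$ give $1\not\ge 2\cdot 1$), so they cannot serve as the building blocks for Steenrod-admissible sequences. Your side remark that positive degree forces $j_1\ge 1$ in $\mathcal{U}$ is likewise false: $Q^{1}Q^{0}$ has positive degree, $j_1=0$, and non-negative excess.

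The inconsistency surfaces concretely in your displayed formulas: $a_2=j_2-2j_1$ together with $a_1=2j_1-j_2$ forces $a_1+a_2=0$, hence $a_1=a_2=0$ whenever both are non-negative, which is absurd. The correct coefficients are the level-by-level excesses $e_m:=j_m-(j_{m-1}+\cdots+j_1)$; these are non-negative precisely under the $\mathcal{U}$ condition, they determine $J$ uniquely via the recursion $j_1=e_1$ and $j_m=2j_{m-1}+e_m-e_{m-1}$, and matching the $e_m$ against the $a_t$ indexing the $J_{k,t}$ is then the genuine bookkeeping step. So the shape of your argument is right, but the linear functionals you need are the excesses $e_m$, not the Steenrod differences $j_s-2j_{s-1}$.
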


\begin{lemma}
$\mathcal{U}\left[ k\right] $\ is closed under the $\mathcal{A}^{op}$\
action and $\mathcal{U}$\ is an unstable coalgebra over $\mathcal{A}^{op}$.\
\ 
\end{lemma}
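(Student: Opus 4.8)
The plan is to verify the two assertions of the lemma directly from the definition of the $\mathcal{A}^{op}$-action on $\mathcal{F}_{0}$ (Definition \ref{Steenrod action}) and the fact, already established for $\mathcal{F}_{0}$, that $\mathcal{F}_{0}$ is an unstable coalgebra over $\mathcal{A}^{op}$. Since $\mathcal{U}=\mathcal{F}_{0}/\langle\text{negative excess}\rangle$ and $\mathcal{U}[k]$ is the image of $\mathcal{F}_{0}[k]$, the only thing that genuinely needs checking is that the defining ideal is an $\mathcal{A}^{op}$-submodule of $\mathcal{F}_{0}$; everything else is then inherited.

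First I would recall that $\mathcal{F}_{0}[k]$ is already closed under the action (stated earlier), so $\mathcal{U}[k]$, being a quotient of $\mathcal{F}_{0}[k]$, will be closed once we know the action descends. To see that the action descends, let $I(N)\subseteq\mathcal{F}_{0}$ be the two-sided ideal generated by all monomials $Q^{I}$ with $e(I)<0$. It suffices to show $Sq^{a}(Q^{I})\in I(N)$ whenever $e(I)<0$, and more generally $Sq^{a}$ of any element of $I(N)$ lies in $I(N)$. For a monomial $Q^{I}=Q^{i_{k}}\cdots Q^{i_{1}}$ of negative excess, apply the recursive formula for $Sq^{a}(Q^{i_{k}}r)$: each summand is $Q^{i_{k}-a+t}Sq^{t}(r)$, which is a linear combination of monomials obtained from $Q^{I}$ by lowering the leading entry by $a-t$ and modifying the tail by a Steenrod operation of degree $t$. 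The key arithmetic point is that the excess of each resulting monomial is at most the excess of $Q^{I}$: lowering $i_{k}$ and raising (the Steenrod-image of) the tail can only decrease $e$. Hence every summand again has negative excess and lies in $I(N)$. For a general element $x\cdot Q^{I}\cdot y\in I(N)$ with $e(I)<0$, the Cartan-type formula (the second and third clauses of Definition \ref{Steenrod action}, applied iteratively) expresses $Sq^{a}$ of the product as a sum of products in which the middle factor is $Sq^{t}(Q^{I})$ for various $t$, hence still in $I(N)$ by the monomial case.

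For the second assertion, that $\mathcal{U}$ is an unstable coalgebra over $\mathcal{A}^{op}$, I would note that the three structural requirements — compatibility of the $\mathcal{A}^{op}$-action with the coproduct (Nishida-type relations), the instability condition, and the coalgebra axioms — all hold in $\mathcal{F}_{0}$ by the earlier lemma, and each is preserved under passage to the quotient $\mathcal{U}=\mathcal{F}_{0}/I(N)$ precisely because $I(N)$ is a coideal (it is spanned by monomials, and the coproduct of a negative-excess monomial is a sum of tensors at least one factor of which has negative excess, using $e(I+J)=e(I)+e(J)$ and the splitting of excess under the component coproduct) as well as an $\mathcal{A}^{op}$-submodule by the previous paragraph. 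Then the instability and Nishida relations for $\mathcal{U}$ are the images of those for $\mathcal{F}_{0}$.

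The main obstacle is the excess bookkeeping in the monomial case: one must confirm that, in the expansion of $Sq^{a}(Q^{i_{k}}\cdots Q^{i_{1}})$ via the iterated recursion of Definition \ref{Steenrod action}, no summand can have its excess jump above that of the original sequence. This is where the detailed structure of the formula $Sq^{a}(Q^{b}r)=\sum_{t}\binom{b-a}{a-2t}Q^{b-a+t}Sq^{t}(r)$ matters: the leading term drops by $a-t\ge 0$ while the total degree removed from the tail is $t$, so the new excess is $e$ shifted by $-(a-t)-t=-a\le 0$ at the top level, and an induction on length controls the lower levels. Once this inequality is pinned down the rest is formal, so I would present the excess estimate carefully and treat the coideal and module-closure consequences as routine.
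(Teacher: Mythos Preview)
Your approach is the paper's --- verify that the Nishida action cannot raise excess --- and you in fact attempt more than the paper, which only writes out the length-two check $Sq^{a}(Q^{b}Q^{c})$ with $b<c$ and observes $b-a+t<c-t$. But two slips need repair. First, your excess arithmetic has a sign error: when the leading entry drops by $a-t$ and the tail degree drops by $t$, the excess (leading minus tail) changes by $-(a-t)-(-t)=2t-a$, not $-(a-t)-t=-a$; the conclusion survives only because the binomial $\binom{b-a}{a-2t}$ forces $2t\le a$.

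Second, and more seriously, there is no Cartan formula for this action, so your reduction of the general element $xQ^{I}y$ to the monomial case is not valid as stated: iterating Definition~\ref{Steenrod action} does \emph{not} produce middle factors of the form $Sq^{t}(Q^{I})$. If $I=(j_{l},\ldots,j_{1})$, then after peeling off $x$ and then the entries of $I$, the $I$-portion of a typical summand is $Q^{j_{l}-s_{0}+s_{1}}\cdots Q^{j_{1}-s_{l-1}+s_{l}}$ with $s_{i-1}\ge 2s_{i}$ for each $i$, and these are not in general summands of any $Sq^{t}(Q^{I})$ (that would force $s_{l}=0$). The fix is to compute the excess of this subword directly: it equals $e(I)-s_{0}+2s_{1}-s_{l}\le e(I)$, which is exactly the arithmetic you already did for the whole monomial, now applied to the consecutive subword.
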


\begin{proof}
The excess condition must be checked. Let $b<c$, then $Sq_{\ast
}^{a}Q^{b}Q^{c}=\tsum \binom{b-a}{a-2t}\binom{c-t}{t}Q^{b-a+t}Q^{c-t}$.

Restrictions should be taken into account. $2t\leq a$ implies $b+2t<c+a$, so
we get $b+2t<c+a$. Finally, $b-a+t<c-t$.\ \ 
\end{proof}

The opposite Steenrod coalgebra epimorphism 
\begin{equation*}
\mathcal{F}_{0}\left[ k\right] \twoheadrightarrow \mathcal{U}\left[ k\right]
\end{equation*}%
induces the following corollary.

\begin{corollary}
$\mathcal{U}\left[ k\right] ^{\ast }$ is a polynomial algebra closed under
the Steenrod algebra action.
\end{corollary}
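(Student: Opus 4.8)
The plan is to deduce the corollary directly from Theorem~\ref{Basic} together with the coalgebra epimorphism $\mathcal{F}_{0}\left[ k\right] \twoheadrightarrow \mathcal{U}\left[ k\right]$. Dualizing this epimorphism yields an algebra monomorphism $\mathcal{U}\left[ k\right] ^{\ast} \hookrightarrow \mathcal{F}_{0}\left[ k\right] ^{\ast}$, and by Theorem~\ref{Basic} the target is the polynomial algebra $\Bbb{F}_2\left[ y_{k,i}\;|\;1\leq i\leq k\right]$. Since $\mathcal{U}\left[ k\right]$ is spanned by admissible monomials of length $k$, the first step is to identify, using the Proposition quoted from \cite{Kech2}, the image of $\mathcal{U}\left[ k\right] ^{\ast}$ inside $\mathcal{F}_{0}\left[ k\right] ^{\ast}$. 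The decomposition $J=\sum_{t=1}^{k}a_{t}J_{k,t}$ of admissible sequences should be matched against the basis $\{x_{k,i}\}$ of primitives; I expect the duals of the admissible generators $Q^{J_{k,t}}$ to be (up to the triangularity already established in the proof of Theorem~\ref{Basic}) polynomial generators, so that $\mathcal{U}\left[ k\right] ^{\ast}$ is the polynomial subalgebra generated by the classes dual to the $Q^{J_{k,t}}$, $1\leq t\leq k$.

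Concretely, first I would set $z_{k,t}=\left( Q^{J_{k,t}}\right) ^{\ast}\in \mathcal{U}\left[ k\right] ^{\ast}$ and show these are algebraically independent. Using the total ordering on sequences and the triangularity statement ``$\left\langle \xi^{\Lambda},Q^{J}\right\rangle \equiv 1 \Rightarrow J\geq \Lambda(I)$'' from the proof of Theorem~\ref{Basic}, together with part (c) of the Milnor-basis theorem (the Adem/admissibility triangularity), a monomial $\prod z_{k,t}^{a_{t}}$ pairs nontrivially with $Q^{\sum a_{t}J_{k,t}}$ and only with admissible monomials that are $\geq$ it. This gives an upper-triangular change of basis between $\{\prod z_{k,t}^{a_{t}}\}$ and the admissible monomial basis of $\mathcal{U}\left[ k\right]$, hence the $z_{k,t}$ freely generate $\mathcal{U}\left[ k\right] ^{\ast}$ as a polynomial algebra; a Poincar\'e-series count in each degree (the number of admissible length-$k$ monomials of given degree equals the number of monomials in $k$ polynomial generators of degrees $2,4,\dots,2^{k}$ — wait, degrees $|J_{k,t}|=2^{t}$) closes the argument.

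For the second assertion, that $\mathcal{U}\left[ k\right] ^{\ast}$ is closed under the Steenrod algebra action: the action of $\mathcal{A}$ on $\mathcal{F}_{0}$ dualizes to an action on $\mathcal{F}_{0}{}^{\ast}$, and since $\mathcal{U}\left[ k\right]$ is an $\mathcal{A}^{op}$-subcoalgebra of $\mathcal{F}_{0}\left[ k\right]$ (by the preceding lemma), the quotient map $\mathcal{F}_{0}\left[ k\right]^{\ast}\leftarrow \mathcal{U}\left[ k\right]^{\ast}$ is compatible with the actions, so it suffices to observe that the subalgebra generated by the $z_{k,t}$ is preserved. One needs the analogue of the lemma $Sq^{a}y_{k,i}^{b}=\binom{b}{a}y_{k,i}^{b+a}$ for these generators; I would compute $Sq^{a}z_{k,t}$ from the $\mathcal{A}^{op}$-action on $Q^{J_{k,t}}$, expecting a Cartan-type formula keeping everything inside the polynomial algebra on the $z_{k,t}$. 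This is essentially Madsen's computation of the action on $\mathcal{R}^{\ast}$ recovered in the present framework.

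The main obstacle is the first step: verifying that the duals of the admissible generators $Q^{J_{k,t}}$ really are polynomial generators of $\mathcal{U}\left[ k\right] ^{\ast}$, i.e.\ that passing to the quotient $\mathcal{U}\left[ k\right]$ does not introduce relations among them. This reduces to a careful bookkeeping comparison of two triangular systems — the excess/length ordering governing $\mathcal{F}_{0}$ and the admissibility ordering governing $\mathcal{U}$ — and to matching the Poincar\'e series, where one must be sure that the number of admissible monomials of length $k$ and degree $n$ is exactly the coefficient of $x^{n}$ in $\prod_{t=1}^{k}(1-x^{2^{t}})^{-1}$. Everything else is formal duality plus the Nishida-type formulas already set up in Definition~\ref{Steenrod action}.
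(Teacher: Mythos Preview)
The paper gives essentially no argument here: it simply records that the $\mathcal{A}^{op}$-coalgebra epimorphism $\mathcal{F}_{0}[k]\twoheadrightarrow\mathcal{U}[k]$ ``induces'' the corollary, leaving the reader to transport the method of Theorem~\ref{Basic}. Your plan---dualize to an algebra monomorphism $\mathcal{U}[k]^{\ast}\hookrightarrow\mathcal{F}_{0}[k]^{\ast}$, pick candidate generators $z_{k,t}=(Q^{J_{k,t}})^{\ast}$, establish triangularity of the pairing, and close with a Poincar\'e-series count using the bijection $\mathbb{N}^{k}\to\{\text{surviving monomials}\}$ from the quoted Proposition---is exactly how one makes the paper's one-line claim rigorous, and is in the same spirit as the proof of Theorem~\ref{Basic}. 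The Steenrod-closure part is, as you say, formal from the $\mathcal{A}^{op}$-compatibility of the quotient map.

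There is one genuine confusion to flag. You invoke ``part (c) of the Milnor-basis theorem (the Adem/admissibility triangularity)'', but $\mathcal{U}$ carries \emph{no Adem relations}: it is the quotient of $\mathcal{F}_{0}$ only by the two-sided ideal of negative-excess monomials. Hence there is no rewriting step and no second triangular system to reconcile; the monomials $Q^{I}$ with every initial subsequence of $I$ of non-negative excess already form a linear basis of $\mathcal{U}[k]$, and the Proposition from \cite{Kech2} puts that basis in bijection with $\mathbb{N}^{k}$ via $(a_{1},\dots,a_{k})\mapsto Q^{\sum a_{t}J_{k,t}}$. The only triangularity you need is the one from the proof of Theorem~\ref{Basic} (indeed, the pairing $\langle\prod z_{k,t}^{a_{t}},Q^{J}\rangle$ is computed directly from the iterated coproduct on $\mathcal{U}[k]$, which is just the restriction of that on $\mathcal{F}_{0}[k]$). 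Once you drop the spurious Adem step, your outline goes through and matches what the paper intends.
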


As in the diagram (\ref{direct F_0 F A_2}), the quotient maps above induce
the map 
\begin{equation*}
\underrightarrow{\lim }\mathcal{F}_{0}\left[ k\right] \twoheadrightarrow 
\underrightarrow{\lim }\mathcal{U}\left[ k\right] \text{.}
\end{equation*}%
\ 

The Dyer-Lashof algebra $\mathcal{R}$ is given as the quotient of $\mathcal{U%
}$ modulo the ideal generated by the Adem relations in homology:%
\begin{equation*}
Q^{r}Q^{s}-\sum\limits_{t>0}\binom{t-s-1}{2t-r}Q^{r+s-t}Q^{t}\text{ such
that }r>2s\text{.}
\end{equation*}

\begin{definition}
\label{length k}Let $k\geq 1$, $R[k]$\ is the sub $\mathcal{A}$-coalgebra
spanned by admissible monomials of fixed length $k$ and non-negative excess. 
\begin{equation*}
\{Q^{(I,\varepsilon )}\;|\;(I,\varepsilon )\text{ admissible, }l\left(
I\right) =k\text{ and }e\left( I,\varepsilon \right) \geq 0\text{ }\}\text{. 
}
\end{equation*}
\end{definition}

It is a non-negatively graded Hopf algebra and also a component coalgebra $%
R=\bigoplus\limits_{k\geq 0}R[k]$ with respect to the length.

We recall that each admissible sequence admits a unique decomposition with
respect to the primitive monomial elements. We repeat the appropriate
results from \cite{Madsen}.

\begin{definition}
Let $t$\ be a natural number such that $0\leq t\leq k-1$ and the term $%
2^{-1} $ is omitted\ in the following expressions. \newline
Define $I_{k,t}=\left( 2^{k-1}-2^{t-1},...,2^{k-t}-1,2^{k-t-1},...,1\right) $%
. The degree of $I_{k,t}$\ is $2^{k}-2^{t}$.\ 
\end{definition}

\begin{proposition}
\cite{Madsen}\label{madsen primitive decomposition}Let $Q^{I}$ be an\
admissible monomial of length $k$\ and of positive degree. Then there exist
natural numbers $\left( a_{0},a_{1},\cdots ,a_{k-1}\right) $\ such that $I$\
can be written uniquely as\ \ \ \ 
\begin{equation*}
I=\tsum\limits_{t=0}^{k-1}a_{t}I_{k,t}\text{. }
\end{equation*}
\end{proposition}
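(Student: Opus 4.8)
The plan is to prove the unique-decomposition statement for admissible monomials in $R[k]$ in close parallel with the analogous Proposition for $\mathcal{U}[k]$ (from \cite{Kech2}), since the combinatorial content is the same once the primitive monomials $I_{k,t}$ are in hand. First I would recall that a sequence $I=(i_k,\dots,i_1)$ is admissible for the Dyer-Lashof algebra exactly when $i_{j}\geq 2i_{j-1}$ for all $j$; writing $n_{k-t}$ for the "excess slacks'' $n_{k-t}=i_{t+1}-2i_t$ (with the convention $i_0$ absent, so $n_{k-1}=i_1$), admissibility is precisely the assertion that all $n_j\geq 0$. Thus the map $I\mapsto(n_0,\dots,n_{k-1})$ is a bijection from admissible length-$k$ sequences onto $\mathbb{N}^{k}$; this is the length-$k$ analogue of part (b) of the Milnor-basis theorem for $\mathcal{A}_2$ above.

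Second, I would check that under this bijection the primitive monomial sequence $I_{k,t}=(2^{k-1}-2^{t-1},\dots,2^{k-t}-1,2^{k-t-1},\dots,1)$ corresponds to the standard basis vector $\mathbf{e}_{t}$ of $\mathbb{N}^k$ (i.e. all slacks zero except one equal to $1$). This is a direct computation: for $I_{k,t}$ the tail $(2^{k-t-1},\dots,2^{k-t-2},\dots,1)$ doubles perfectly at each step so those slacks vanish, then the single jump from $2^{k-t}-1$ to $2^{k-t+1}-2$\dots wait, the precise bookkeeping is that exactly one slack (the one recording the transition at position $t$) equals $1$ and all others are $0$, giving degree $2^{k}-2^{t}$ as stated. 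Granting this, linearity of $I\mapsto(n_0,\dots,n_{k-1})$ on termwise sums (noted in the opening of Section 1, $d(I+J)=d(I)+d(J)$ and the excess is additive, hence each slack is additive) shows $\sum_{t=0}^{k-1}a_tI_{k,t}$ has slack vector $(a_0,\dots,a_{k-1})$.

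Third, I would assemble the argument: given an admissible $Q^I$ of length $k$ and positive degree, let $(a_0,\dots,a_{k-1})$ be its slack vector. Since all $a_t\geq 0$ and the slack vector of $\sum a_tI_{k,t}$ is also $(a_0,\dots,a_{k-1})$, and since the slack map is injective on length-$k$ admissible sequences, we get $I=\sum_{t=0}^{k-1}a_tI_{k,t}$, and the $a_t$ are uniquely determined because they are read off from $I$ by the explicit formula $a_{k-1-t}=i_{t+1}-2i_t-4i_{t-1}-\cdots-2^{t}i_1$. Positivity of the degree guarantees that not all $a_t$ vanish, so this is a genuine (nonempty) decomposition. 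One should also note $I_{k,t}$ is indeed admissible and primitive in $R[k]$ — admissibility is the zero-slack observation, and primitivity follows from the coproduct structure exactly as in Lemma on the $x_{k,i}$ (or one simply cites \cite{Madsen}).

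The main obstacle — really the only place where care is needed — is verifying that the sequences $I_{k,t}$ as written (with the stated omission of the "$2^{-1}$'' term, which is a notational device for handling the boundary case $t=0$) genuinely have all-but-one slack equal to zero and the exceptional slack equal to $1$; the indexing of which slack is nonzero, and the off-by-one conventions in $n_{k-t}$ versus $a_t$, are easy to get wrong. Once that lemma is pinned down, the rest is the formal linear-algebra/bijection argument above, and I would present it tersely — indeed the original source \cite{Kech2}, \cite{Madsen} treats it as routine, so a one-line proof ("This is immediate from the bijection between admissible length-$k$ sequences and $\mathbb{N}^k$ given by the excess slacks, under which $I_{k,t}\mapsto \mathbf{e}_t$; see \cite{Madsen}.'') is defensible in context.
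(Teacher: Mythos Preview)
The paper does not actually prove this proposition: it is quoted from \cite{Madsen} with no accompanying argument, so there is nothing in the text to compare your attempt against directly.

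That said, your proposal contains a genuine error that makes the argument fail as written. You assert that $I=(i_k,\ldots,i_1)$ is admissible for the Dyer-Lashof algebra when $i_j\geq 2i_{j-1}$, but that is the \emph{Steenrod} admissibility condition. The Dyer-Lashof Adem relations displayed just before the definition of $R[k]$ apply when $r>2s$, so a monomial is admissible in $\mathcal{R}$ precisely when $i_j\leq 2i_{j-1}$ --- the reverse inequality. With the correct condition your ``slacks'' $n_{k-t}=i_{t+1}-2i_t$ are non\emph{positive} on admissibles, not nonnegative, and your claimed bijection onto $\mathbb{N}^k$ collapses. You have also omitted the role of the excess. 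A direct computation (try $k=2,3$) shows the inverse change of basis is
\[
a_0=e(I)=i_k-\textstyle\sum_{j<k}i_j,\qquad a_t=2\,i_{k-t}-i_{k-t+1}\ \ (1\leq t\leq k-1),
\]
so that $a_0\geq 0$ is exactly the non-negative-excess condition while $a_t\geq 0$ for $t\geq 1$ is exactly Dyer-Lashof admissibility; one then verifies $a_t(I_{k,s})=\delta_{t,s}$. With these corrections your linear-algebra strategy is sound and is essentially Madsen's argument, but as stated the inequalities point the wrong way and the proof does not go through.
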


It is known that $R\left[ k\right] $\ respects the action of the opposite of
the Steenrod algebra.\ 

\begin{lemma}
\cite{Madsen}The Dyer-Lashof algebra $\mathcal{R}$\ is an unstable coalgebra
over $\mathcal{A}^{op}$.\ Moreover, $\mathcal{R}\left[ k\right] $\ is closed
under the $\mathcal{A}^{op}$\ action.
\end{lemma}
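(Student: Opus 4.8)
The plan is to derive the statement by descent from the facts already in hand for $\mathcal{F}_{0}$ and $\mathcal{U}$, using that $\mathcal{R}=\mathcal{U}/I$ with $I$ the two-sided ideal generated by the homology Adem relations $Q^{r}Q^{s}-\sum_{t>0}\binom{t-s-1}{2t-r}Q^{r+s-t}Q^{t}$ ($r>2s$), together with the already established facts that $\mathcal{U}[k]$ is closed under the $\mathcal{A}^{op}$-action and that $\mathcal{U}$ is an unstable $\mathcal{A}^{op}$-coalgebra.

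First I would record that the action of Definition \ref{Steenrod action} makes $\mathcal{F}_{0}$, hence every quotient on which the action survives, an unstable coalgebra over $\mathcal{A}^{op}$. Instability, $Sq^{a}x=0$ whenever $2a>|x|$, holds on a generator since $Sq^{a}Q^{b}=\binom{b-a}{a}Q^{b-a}$ vanishes once $b<2a$, and an induction on length via $Sq^{a}(Q^{b}r)=\sum_{t}\binom{b-a}{a-2t}Q^{b-a+t}Sq^{t}(r)$ propagates it: a surviving summand forces $2t\le a$, $b\ge 2a-2t$ and, inductively, $|r|\ge 2t$, so $|x|=b+|r|\ge 2a$. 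The Cartan formula $\psi(Sq^{a}x)=\sum_{i+j=a}(Sq^{i}\otimes Sq^{j})\psi(x)$ is checked on generators by Vandermonde's identity $\sum_{i+j=a}\binom{u}{i}\binom{v}{j}=\binom{u+v}{a}$, and then extends to all monomials by induction on length, using the recursive shape of the action on products together with the multiplicativity of $\psi$; this says precisely that the $\mathcal{A}^{op}$-action is a coalgebra map. All of this passes to the quotient coalgebras $\mathcal{U}$ and $\mathcal{U}[k]$.

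Next I would show the action descends to $\mathcal{R}$. Because the action of $Sq^{a}$ on a product is completely determined by the Nishida-type formula, it is enough to check that $Sq^{a}$ sends each generator of $I$ into $I$: apply $Sq^{a}$ to $Q^{r}Q^{s}$ and to each $Q^{r+s-t}Q^{t}$ occurring on the right-hand side, re-expand the resulting non-admissible length-two monomials by the homology Adem relations, and compare the two outputs modulo $I$. This is exactly the classical compatibility of the Nishida relations with the homology Adem relations, and it is the step I expect to be the real obstacle — a finite but delicate binomial-coefficient identity. Its necessity is underscored by the Remark above: the analogous compatibility \emph{fails} for the cohomology Adem ideal $I(A)$. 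Granting it, $\mathcal{A}^{op}$ acts on $\mathcal{R}$, the projection $\mathcal{U}\twoheadrightarrow\mathcal{R}$ is $\mathcal{A}^{op}$-linear, and instability and the Cartan formula, being inherited by $\mathcal{A}^{op}$-linear coalgebra quotients, hold in $\mathcal{R}$; hence $\mathcal{R}$ is an unstable coalgebra over $\mathcal{A}^{op}$.

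Finally, for the fixed-length clause I would use that every term of each homology Adem relation has length $2$, so $I$ is length-homogeneous and the projection restricts to $\mathcal{U}[k]\twoheadrightarrow\mathcal{R}[k]$; since $\mathcal{U}[k]$ is closed under the action and the projection is $\mathcal{A}^{op}$-linear, $\mathcal{R}[k]$ is closed under the $\mathcal{A}^{op}$-action, giving $\mathcal{R}=\bigoplus_{k\ge 0}\mathcal{R}[k]$ as $\mathcal{A}^{op}$-coalgebras. As an alternative to grinding through the binomial identity in the descent step, one may instead invoke the realization of $\mathcal{R}$ as the $\func{mod}2$ homology of an $E_{\infty}$-space, where the Nishida relations and their compatibility with the homology Adem relations hold automatically; the remaining steps are formal.
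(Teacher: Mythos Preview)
Your proposal is correct, but your primary route differs from the paper's. The paper gives a two-line argument that is precisely the alternative you mention at the very end: it invokes the evaluation map $e:\mathcal{R}\to H_{\ast}(QS^{0})$, $e(Q^{I})=Q^{I}i_{0}$, and the fact that $H_{\ast}(Q_{0}S^{0})$ is already known to be an unstable coalgebra over $\mathcal{A}^{op}$; the Nishida relations and their compatibility with the homology Adem relations then come for free from topology, and the length filtration is respected because $e$ is. Your main line of attack is instead a purely algebraic descent through the tower $\mathcal{F}_{0}\twoheadrightarrow\mathcal{U}\twoheadrightarrow\mathcal{R}$, reducing everything to the single combinatorial check that $Sq^{a}$ preserves the Adem ideal $I$. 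That approach is more self-contained and illuminates exactly where the content lies (the binomial identity you flag), but as you yourself note, actually carrying out that identity is the whole difficulty, and you do not do it; so in the end you are forced back onto the topological shortcut, which is what the paper uses from the start. In short: the paper's proof \emph{is} your ``alternative,'' and it buys you the hard step for free; your algebraic framework is sound and would give an internal proof if the binomial compatibility were written out.
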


\begin{proof}
The proof uses the fact that $H_{\ast }(Q_{0}S^{0})$ is an unstable
coalgebra over $\mathcal{A}^{op}$ and the evaluation map 
\begin{equation*}
e:\mathcal{R}\rightarrow H_{\ast }(QS^{0})\text{ given by }e\left(
Q^{I}\right) =Q^{I}i_{0}\text{.}
\end{equation*}%
Here $i_{0}$ is the fundamental class of $H_{0}(S^{0})\hookrightarrow
H_{\ast }(QS^{0})$. \ 
\end{proof}

The opposite Steenrod coalgebra epimorphisms 
\begin{equation*}
\mathcal{F}_{0}\left[ k\right] \twoheadrightarrow \mathcal{U}\left[ k\right]
\twoheadrightarrow \mathcal{R}\left[ k\right]
\end{equation*}%
induce the following corollary.

\begin{corollary}
$\mathcal{R}\left[ k\right] ^{\ast }$ is a polynomial algebra closed under
the Steenrod algebra action.
\end{corollary}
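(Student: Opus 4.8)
The plan is to repeat the argument of Theorem~\ref{Basic}, now with the Madsen decomposition (Proposition~\ref{madsen primitive decomposition}) in the role played there by the $x_{k,i}$. Dualizing the composite opposite Steenrod coalgebra epimorphism $\mathcal{F}_{0}\left[ k\right] \twoheadrightarrow \mathcal{U}\left[ k\right] \twoheadrightarrow \mathcal{R}\left[ k\right] $ gives a degree preserving injection of graded algebras $\mathcal{R}\left[ k\right] ^{\ast }\rightarrowtail \mathcal{F}_{0}\left[ k\right] ^{\ast }$. Since each of these epimorphisms is $\mathcal{A}^{op}$-equivariant (the lemma of \cite{Madsen} for $\mathcal{R}\left[ k\right] $ and the corresponding lemmas for $\mathcal{U}\left[ k\right] $ and $\mathcal{F}_{0}\left[ k\right] $), this injection is $\mathcal{A}$-equivariant, so it carries $\mathcal{R}\left[ k\right] ^{\ast }$ onto an $\mathcal{A}$-submodule of $\mathcal{F}_{0}\left[ k\right] ^{\ast }$; equivalently, the $\mathcal{A}^{op}$-module $\mathcal{R}\left[ k\right] $ dualizes to an $\mathcal{A}$-module $\mathcal{R}\left[ k\right] ^{\ast }$. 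This disposes of the ``closed under the Steenrod algebra action'' clause, and the remaining task is to prove that $\mathcal{R}\left[ k\right] ^{\ast }$ is polynomial.

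For $0\leq t\leq k-1$ let $w_{k,t}=\left( Q^{I_{k,t}}\right) ^{\ast }\in \mathcal{R}\left[ k\right] ^{\ast }$ be the Kronecker dual of the primitive admissible monomial $Q^{I_{k,t}}$ of degree $2^{k}-2^{t}$, and consider the degree preserving algebra map from the polynomial algebra $\Bbb{F}_2\left[ w_{k,t}\;|\;0\leq t\leq k-1\right] $ (indeterminates in degrees $2^{k}-2^{t}$) to $\mathcal{R}\left[ k\right] ^{\ast }$ that sends each indeterminate to the corresponding element $w_{k,t}$. I claim it is an isomorphism. First I would match Poincar\'{e} series: by Proposition~\ref{madsen primitive decomposition} the admissible length $k$ monomials of positive degree are in degree preserving bijection with $\mathbb{N}^{k}\setminus \left\{ 0\right\} $ via $\left( a_{0},\cdots ,a_{k-1}\right) \mapsto Q^{\sum_{t}a_{t}I_{k,t}}$, and since $e\left( I_{k,0}\right) =1$ while $e\left( I_{k,t}\right) =0$ for $t\geq 1$ one gets $e\left( \sum_{t}a_{t}I_{k,t}\right) =a_{0}\geq 0$, so every one of these monomials already lies in $\mathcal{R}\left[ k\right] $; hence both $\mathcal{R}\left[ k\right] ^{\ast }$ and $\Bbb{F}_2\left[ w_{k,t}\;|\;0\leq t\leq k-1\right] $ have Poincar\'{e} series $\prod_{t=0}^{k-1}\left( 1-x^{2^{k}-2^{t}}\right) ^{-1}$. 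Being a degree preserving map between graded vector spaces with equal and finite dimensions in each degree, our map is an isomorphism as soon as it is injective, that is, as soon as the monomials $\prod_{t}w_{k,t}^{a_{t}}$ have linearly independent images.

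That injectivity is the heart of the matter, and I would get it from a triangularity statement parallel to the key claim in the proof of Theorem~\ref{Basic}. Writing $\Lambda =\left( a_{0},\cdots ,a_{k-1}\right) $ and $\Lambda \left( I\right) =\sum_{t}a_{t}I_{k,t}$, the claim is that, for $Q^{J}$ admissible of length $k$, $\left\langle \prod_{t}w_{k,t}^{a_{t}},Q^{J}\right\rangle \equiv 1\func{mod}2$ forces $J\geq \Lambda \left( I\right) $ in the total ordering, with $\left\langle \prod_{t}w_{k,t}^{a_{t}},Q^{\Lambda \left( I\right) }\right\rangle =1$. One expands this Kronecker product as the pairing of $\bigotimes_{t}w_{k,t}^{\otimes a_{t}}$ with the $a$-fold iterated coproduct of $Q^{J}$ ($a=\sum_{t}a_{t}$), which in $\mathcal{R}\left[ k\right] $ means: sum over termwise splittings $J=J_{1}+\cdots +J_{a}$, reduce each $Q^{J_{i}}$ to the admissible basis of $\mathcal{R}\left[ k\right] $ by the homology Adem relations, and count $\func{mod}2$ the splittings whose reduction is $\bigotimes_{t}\left( Q^{I_{k,t}}\right) ^{\otimes a_{t}}$; one then strips off the generators one at a time by the inductive scheme on $k$ used in Theorem~\ref{Basic}. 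The new ingredient, absent from the proof of Theorem~\ref{Basic}, is that the \emph{homology} Adem relations are themselves triangular for the total ordering --- the analogue for $\mathcal{R}$ of part (c) of the theorem describing the Milnor basis of $\mathcal{A}_{2}$: if $Q^{S}$ is inadmissible of length $k$ and $Q^{S}=\sum a_{J}Q^{J}$ after the homology Adem relations, then $a_{J}\equiv 1\func{mod}2$ implies $J<S$. I expect this triangularity --- together with arranging the inductive ``stripping'' so that all intermediate terms remain inside $\mathcal{R}\left[ k\right] $ rather than merely inside $\mathcal{F}_{0}\left[ k\right] $ (the naive length raising maps $Q^{I}\mapsto Q^{0}Q^{I}$ and $Q^{I}\mapsto Q^{I}Q^{0}$ fail to preserve admissibility or non-negative excess, so one routes the induction through the ambient inclusion $\mathcal{R}\left[ k\right] ^{\ast }\rightarrowtail \mathcal{F}_{0}\left[ k\right] ^{\ast }$) --- to be the main obstacle; the rest is routine bookkeeping on the template of Theorem~\ref{Basic}.
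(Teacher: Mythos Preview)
Your approach is the paper's approach, made explicit. The paper offers no proof beyond the sentence preceding the corollary: the opposite Steenrod coalgebra epimorphisms $\mathcal{F}_{0}[k]\twoheadrightarrow\mathcal{U}[k]\twoheadrightarrow\mathcal{R}[k]$ ``induce'' the result. Read charitably, this means exactly what you wrote: dualize to get the $\mathcal{A}$-equivariant algebra injection $\mathcal{R}[k]^{\ast}\rightarrowtail\mathcal{F}_{0}[k]^{\ast}$ (giving closure under the Steenrod action), and then rerun the triangularity-plus-dimension-count of Theorem~\ref{Basic} with Madsen's primitives $Q^{I_{k,t}}$ playing the role of the $x_{k,i}$ and Proposition~\ref{madsen primitive decomposition} supplying the bijection with $\mathbb{N}^{k}$. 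The paper does not address the obstacles you flag --- that the shift $\Phi_{k}$ does not land in $\mathcal{R}[k+1]$, and that Adem relations intervene in the iterated coproduct --- it simply leans on Madsen's original computation and the template of Theorem~\ref{Basic}. So you have in fact been more careful than the paper about where the work lies.

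One correction of detail: for the \emph{homology} Adem relations the triangularity runs the other way from the cohomology case. If $r>2s$, each term $Q^{r+s-t}Q^{t}$ in the expansion of $Q^{r}Q^{s}$ has $t>s$, so in the paper's ordering the admissible outputs satisfy $J>S$, not $J<S$. Consequently $\langle w_{k,t},Q^{K}\rangle\neq 0$ forces $I_{k,t}\geq K$, and the triangularity you want is $\langle\prod_{t}w_{k,t}^{a_{t}},Q^{J}\rangle\neq 0\Rightarrow J\leq\Lambda(I)$, with value $1$ at $J=\Lambda(I)$. This is harmless for your conclusion: a lower-triangular unipotent matrix is just as invertible as an upper-triangular one, so the linear-independence step goes through unchanged.
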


Let us examine the induced map $\varphi _{k}:\mathcal{R}\left[ k\right]
\rightarrow \mathcal{R}\left[ k+1\right] $ given by $\varphi
_{k}Q^{I}=Q^{I}Q^{0}$ (see \ref{direct F_0}). It is not an injection any
longer because of the Adem relations, neither an onto map. For example $%
\varphi _{1}Q^{1}=Q^{1}Q^{0}=0$ and there is no $Q^{i}$\ such that $\varphi
_{1}Q^{i}=Q^{2}Q^{1}$.

\begin{proposition}
The map $\varphi _{k}:\mathcal{R}\left[ k\right] \rightarrow \mathcal{R}%
\left[ k+1\right] $ given by $\varphi _{k}Q^{I}=Q^{I}Q^{0}$ is an $\mathcal{A%
}^{opp}$-coalgebra map for $k\geq 1$. Moreover, \newline
i) if $I\notin 2%
\mathbb{N}
^{k}$\ and $I$\ is admissible, then $\varphi _{k}Q^{I}=0$;\newline
ii) Let $I=\tsum\limits_{i=0}^{k-1}2a_{i}I_{k,i}$, then $\varphi
_{k}Q^{I}=Q^{J}$ where $J=\tsum\limits_{i=0}^{k-1}a_{i}I_{k+1,i+1}$.
\end{proposition}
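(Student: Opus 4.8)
The plan is to derive the three structural assertions from the corresponding properties of $\mathcal{F}_{0}$, and then to prove (i) and (ii) together by a single induction on the length.

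For the structural part I would first note that $\varphi_{k}$ is nothing but right multiplication by $Q^{0}$, carried down the quotients $\mathcal{F}_{0}[k]\twoheadrightarrow\mathcal{U}[k]\twoheadrightarrow\mathcal{R}[k]$; this is well defined because the relevant ideals are two-sided, and it raises the length by exactly one, so it maps $\mathcal{R}[k]$ to $\mathcal{R}[k+1]$. Since $\psi Q^{0}=Q^{0}\otimes Q^{0}$ and $\psi$ is multiplicative one has $\psi(xQ^{0})=\psi(x)(Q^{0}\otimes Q^{0})$, i.e. $\psi\circ\varphi_{k}=(\varphi_{k}\otimes\varphi_{k})\circ\psi$, so $\varphi_{k}$ is a coalgebra map. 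For the $\mathcal{A}^{op}$-action I would observe, from Definition \ref{Steenrod action}, that $Sq^{a}Q^{0}=\binom{-a}{a}Q^{-a}$ equals $Q^{0}$ for $a=0$ and $0$ otherwise, and that the formula for $Sq^{a}(Q^{b}r)$ with $b>0$ acts only on the tail $r$; a short induction on the length of $x$ then gives $Sq^{a}(xQ^{0})=(Sq^{a}x)Q^{0}$ in $\mathcal{F}_{0}$, hence in $\mathcal{R}$ since the quotient maps are $\mathcal{A}^{op}$-maps. This settles the first sentence.

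For (i) and (ii) I would isolate two facts. First, in $\mathcal{U}$ a monomial is zero as soon as it has a contiguous subword of negative excess, since $\mathcal{U}$ is $\mathcal{F}_{0}$ modulo the two-sided ideal generated by negative-excess monomials. Second, for admissible $I=(i_{k},\dots,i_{1})$ the excesses of the truncations $(i_{j},\dots,i_{1})$ are non-increasing in $j$ (the successive difference being $i_{j+1}-2i_{j}\le 0$), so every element of $\mathcal{R}[k]$ has all truncations of non-negative excess; in particular, for $I=(i_{k},J')\in\mathcal{R}[k]$ the truncation $J'$ lies in $\mathcal{R}[k-1]$. I would also record the recursion $I_{k+1,t+1}=(\,|I_{k,t}|,\,I_{k,t}\,)$ for $0\le t\le k-1$, read off directly from the definition of $I_{k,t}$ and $|I_{k,t}|=2^{k}-2^{t}$; summing this with coefficients $a_{t}$ shows that the sequence $J=\sum a_{i}I_{k+1,i+1}$ associated in (ii) to $I=\sum 2a_{i}I_{k,i}$ is exactly $(\,|I|/2,\,I/2\,)$, where $I/2$ is the sequence obtained by halving every entry of $I$. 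Thus (ii) becomes: $\varphi_{k}Q^{I}=Q^{|I|/2}Q^{I/2}$ for admissible $I\in 2\mathbb{N}^{k}$, and I would prove this together with (i).

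Now the induction on $k$ (the case $k=1$ being the computation below with empty tail). Write $I=(i_{k},J')\in\mathcal{R}[k]$ of positive degree, so $i_{k}\ge 1$, and use $\varphi_{k}Q^{I}=Q^{i_{k}}\varphi_{k-1}(Q^{J'})$. If $J'\notin 2\mathbb{N}^{k-1}$ then $\varphi_{k-1}Q^{J'}=0$ by the inductive hypothesis and $\varphi_{k}Q^{I}=0$, which proves (i) in this case. Otherwise $J'=2J''$ and the hypothesis gives $\varphi_{k-1}Q^{J'}=Q^{|J''|}Q^{J''}$, while $e(I)\ge 0$ forces $i_{k}\ge 2|J''|$. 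For $i_{k}=2|J''|$ the pair $Q^{i_{k}}Q^{|J''|}$ is already admissible and one checks $\varphi_{k}Q^{I}=Q^{|I|/2}Q^{I/2}$. For $i_{k}>2|J''|$ I would apply the homology Adem relation to $Q^{i_{k}}Q^{|J''|}$: each resulting monomial $Q^{i_{k}+|J''|-t}Q^{t}Q^{J''}$ has total excess $(i_{k}+|J''|-t)-t-|J''|=i_{k}-2t\le 0$, strictly negative unless $i_{k}$ is even and $t=i_{k}/2$, so by the first isolated fact only the term $t=i_{k}/2$ can survive in $\mathcal{R}$; when $i_{k}$ is odd it is absent and $\varphi_{k}Q^{I}=0$ (finishing (i)), and when $i_{k}=2m$ it equals $\binom{m-|J''|-1}{0}Q^{m+|J''|}Q^{m}Q^{J''}=Q^{|I|/2}Q^{I/2}$ (finishing (ii)). The step I expect to be the main obstacle is controlling the Adem cascade, but the negative-excess observation collapses it to a single surviving term; I would only need to be careful to check that $Q^{J'}$ genuinely lies in $\mathcal{R}[k-1]$ (via the second fact) and that no degenerate case $i_{k}=0$ of positive degree occurs.
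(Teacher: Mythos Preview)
Your proof is correct. The computational heart --- apply the homology Adem relation to the leftmost non-admissible pair and observe that the excess constraint $i_{k}-2t\le 0$ kills every term except possibly $t=i_{k}/2$ --- is exactly the mechanism the paper uses. The organization, however, is genuinely different.

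The paper works directly rather than inductively: it starts from the rightmost pair $Q^{i_{1}}Q^{0}$, applies Adem to get $Q^{i_{1}/2}Q^{i_{1}/2}$ (or zero), and then iterates leftward, carrying the running sum $(b_{k-t_0}+\cdots+b_{k-1})/2$ in the exponent; parity is tracked through the Madsen coordinates $a_{i}$, and the three length-$2$ subcases are spelled out first as a model. Your reformulation of (ii) as $\varphi_{k}Q^{I}=Q^{|I|/2}Q^{I/2}$ via the identity $I_{k+1,t+1}=(|I_{k,t}|,I_{k,t})$ lets you run a clean induction on $k$ using $\varphi_{k}Q^{I}=Q^{i_{k}}\varphi_{k-1}Q^{J'}$, so the cascade collapses to a single Adem step per length. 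This is shorter and makes the structure transparent; the paper's version has the compensating virtue of exhibiting the full intermediate monomials explicitly in the $a_{i}$-coordinates.

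For the $\mathcal{A}^{op}$-equivariance the difference is sharper. You deduce $Sq^{a}(xQ^{0})=(Sq^{a}x)Q^{0}$ in $\mathcal{F}_{0}$ directly from the recursive definition of the action and then descend to $\mathcal{R}$; this is a one-line induction and covers all $Sq^{a}$ and all $x$ at once. The paper instead checks commutativity of $\varphi_{k}$ with $Sq_{\ast}^{2^{m}}$ by computing which $J'$ satisfy $Sq_{\ast}^{2^{m}}Q^{J'}=Q^{2I_{k,i}}$ and matching against Madsen's corresponding computation for $Q^{I_{k+1,i+1}}$. Your route is more self-contained; the paper's ties the result to Madsen's explicit formulas for the Steenrod action on the primitive basis.
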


\begin{proof}
We start with length 2 sequences in order to demonstrate our method. Let $%
I=\left( 2a_{0}+a_{1},a_{0}+a_{1}\right) $.\ \ There are three cases to be
considered:\newline
i) $a_{0}+a_{1}\equiv 1\func{mod}2$; ii) $a_{0}+a_{1}\equiv 0\func{mod}2$\
and $a_{0}\equiv 1\func{mod}2$; and \newline
iii) $a_{0}$, $a_{1}\equiv 0\func{mod}2$.

i) $Q^{a_{0}+a_{1}}Q^{0}=\tsum \binom{t-1}{2t-\left( a_{0}+a_{1}\right) }%
Q^{a_{0}+a_{1}-t}Q^{t}$. We must also consider the restrictions:\newline
$a_{0}+a_{1}-t\geq t$\ and $2t-\left( a_{0}+a_{1}\right) \geq 0$. Therefor $%
2t=a_{0}+a_{1}$, which contradicts our assumption.\ Hence, 
\begin{equation*}
Q^{I}Q^{0}=0\text{.}
\end{equation*}%
\ 

ii) $Q^{a_{0}+a_{1}}Q^{0}=\tsum \binom{t-1}{2t-\left( a_{0}+a_{1}\right) }%
Q^{a_{0}+a_{1}-t}Q^{t}$. By the argument above, we have:%
\begin{equation*}
Q^{a_{0}+a_{1}}Q^{0}=Q^{\frac{a_{0}+a_{1}}{2}}Q^{\frac{a_{0}+a_{1}}{2}}\text{%
.}
\end{equation*}%
Now we consider the monomial $Q^{2a_{0}+a_{1}}Q^{\frac{a_{0}+a_{1}}{2}}Q^{%
\frac{a_{0}+a_{1}}{2}}$ and in particular the term $Q^{2a_{0}+a_{1}}Q^{\frac{%
a_{0}+a_{1}}{2}}$. 
\begin{equation*}
Q^{2a_{0}+a_{1}}Q^{\frac{a_{0}+a_{1}}{2}}Q^{\frac{a_{0}+a_{1}}{2}}=\tsum 
\binom{t-1-\frac{a_{0}+a_{1}}{2}}{2t-\left( 2a_{0}+a_{1}\right) }%
Q^{2a_{0}+a_{1}+\frac{a_{0}+a_{1}}{2}-t}Q^{t}Q^{\frac{a_{0}+a_{1}}{2}}\text{.%
}
\end{equation*}%
We shall also take into consideration the restrictions:%
\begin{equation*}
2t-\left( 2a_{0}+a_{1}\right) \geq 0\text{ and }2a_{0}+a_{1}+\frac{%
a_{0}+a_{1}}{2}-t\geq t+\frac{a_{0}+a_{1}}{2}\text{.}
\end{equation*}%
Therefor $2t=2a_{0}+a_{1}$\ which contradicts our assumption.\ \ 

iii) In this case we consider the monomial 
\begin{equation*}
Q^{2a_{0}+a_{1}}Q^{\frac{a_{0}+a_{1}}{2}}Q^{\frac{a_{0}+a_{1}}{2}}\text{.}
\end{equation*}%
Applying the\ Adem relations we get 
\begin{equation*}
Q^{2a_{0}+a_{1}}Q^{\frac{a_{0}+a_{1}}{2}}Q^{\frac{a_{0}+a_{1}}{2}}=Q^{\frac{%
2a_{0}+a_{1}}{2}+\frac{a_{0}+a_{1}}{2}}Q^{\frac{2a_{0}+a_{1}}{2}}Q^{\frac{%
a_{0}+a_{1}}{2}}=
\end{equation*}%
\begin{equation*}
Q^{\frac{a_{0}}{2}+a_{0}+a_{1}}Q^{\frac{a_{0}}{2}+\frac{a_{0}+a_{1}}{2}}Q^{%
\frac{a_{0}+a_{1}}{2}}\text{.}
\end{equation*}

For the general case let $I=\tsum\limits_{i=0}^{k-1}a_{i}I_{k,i}=\left(
i_{k},\cdots ,i_{1}\right) $, we call 
\begin{equation*}
i_{t}=b_{k-t}\text{ such that }b_{j}=2b_{j+1}-a_{j+1}\text{ for }j\leq k-2%
\text{\ and }b_{k-1}=\tsum a_{i}\text{.}
\end{equation*}%
\ \ 

Let $\left( a_{0},\cdots ,a_{k-1}\right) \notin 2%
\mathbb{N}
^{k}$\ and there exists an $a_{k-t_{0}}\equiv 1\func{mod}2$\ with $t_{0}$\
the smallest among the appropriate indices. Therefore $a_{k-t_{0}+s}\equiv 0%
\func{mod}2$\ for $s>0$.

If $b_{k-1}=i_{1}\equiv 1\func{mod}2$, then $Q^{i_{1}}Q^{0}=0$\ by the Adem
relations and the restrictions.

If $b_{k-1}=i_{1}\equiv 0\func{mod}2$, then $Q^{i_{1}}Q^{0}=Q^{\frac{i_{1}}{2%
}}Q^{\frac{i_{1}}{2}}$.

Let $b_{k-1}=i_{1}\equiv 0\func{mod}2$ and $a_{k-t_{0}}\equiv 1\func{mod}2$\
as above. In this case, we consider the following monomial:%
\begin{equation*}
Q^{b_{0}}\cdots Q^{b_{k-t_{0}-1}}Q^{\frac{b_{k-t_{0}}+\cdots +b_{k-1}}{2}}Q^{%
\frac{b_{k-t_{0}}}{2}}\cdots Q^{\frac{b_{k-1}}{2}}\text{.}
\end{equation*}%
Now $b_{k-t_{0}-1}\equiv 1\func{mod}2$\ and the Adem relations imply 
\begin{equation*}
Q^{b_{k-t_{0}-1}}Q^{\frac{b_{k-t_{0}}+\cdots +b_{k-1}}{2}}Q^{\frac{%
b_{k-t_{0}}}{2}}=
\end{equation*}%
\begin{equation*}
\tsum \binom{t-\frac{b_{k-t_{0}}+\cdots +b_{k-1}}{2}-1}{2t-b_{k-t_{0}-1}}%
Q^{b_{k-t_{0}-1}+\frac{b_{k-t_{0}}+\cdots +b_{k-1}}{2}-t}Q^{t}Q^{\frac{%
b_{k-t_{0}}}{2}}\text{.}
\end{equation*}%
Because of the restrictions $t=\frac{b_{k-t_{0}-1}}{2}$ which contradicts
our assumption.

If we had $b_{k-t_{0}-1}\equiv 0\func{mod}2$, then we would have $t=\frac{%
b_{k-t_{0}-1}}{2}$\ and 
\begin{equation*}
Q^{b_{k-t_{0}-1}+\frac{b_{k-t_{0}}+\cdots +b_{k-1}}{2}-t}Q^{t}Q^{\frac{%
b_{k-t_{0}}}{2}}=Q^{\frac{b_{k-t_{0}-1}+\cdots +b_{k-1}}{2}}Q^{\frac{%
b_{k-t_{0}-1}}{2}}Q^{\frac{b_{k-t_{0}}}{2}}\text{.}
\end{equation*}

We proceed to the Steenrod algebra action by considering the following
diagram. 
\begin{equation*}
\begin{array}{ccccc}
& Q^{J^{\prime }} & \overset{\varphi _{k}}{\longmapsto } & Q^{J} &  \\ 
Sq_{\ast }^{2^{m}} & \downarrow &  & \downarrow & Sq_{\ast }^{2^{m}} \\ 
& Q^{2I_{k,i}} & \overset{\varphi _{k}}{\longmapsto } & Q^{I_{k+1,i+1}} & 
\end{array}%
\end{equation*}%
Madsen proved that $Sq_{\ast }^{2^{m}}Q^{J}=Q^{I_{k+1,i+1}}$\ if and only if 
$J=I_{k+1,i}$ for $m=i<k$ or $J=I_{k+1,i+1}+I_{k+1,k}$ for $i+1\leq m=k$ 
\cite{Madsen}. We prove the analogue statement for $Q^{2I_{k,i}}$.\ 

\textit{Claim}: $Sq_{\ast }^{2^{m}}Q^{J^{\prime }}=Q^{2I_{k,i}}$\ if and
only if 
\begin{equation*}
J^{\prime }=\left\{ 
\begin{array}{c}
2I_{k,i-1}\text{ for }m=i<k\text{;} \\ 
2I_{k,i}+2I_{k,k-1}\text{ for }i+1<m=k\text{;} \\ 
2I_{k,k-1}\text{ for }i+1=m=k\text{.}%
\end{array}%
\right.
\end{equation*}

\textit{Proof of claim}. Here $m\leq k$.\ First we consider what expression $%
J^{\prime }$ admits\ by degree arguments.%
\begin{equation*}
|J^{^{\prime
}}|=2^{k+1}-2^{i+1}+2^{m}=\tsum\limits_{0}^{k-1}a_{t}|I_{k,t}|=\tsum%
\limits_{0}^{k-1}a_{t}2^{k}-2^{t}\text{.}
\end{equation*}%
There are three cases to consider. In all cases induction is applied.\newline
a) $m=i$\ and $J^{\prime }=2I_{k,i-1}$.\newline
b) $m=k$, $i<k-1$\ and $J^{\prime }=2I_{k,i}+2I_{k,k-1}$.\ \newline
c) $m=k-1$\ and $J^{\prime }=2I_{k,i}+I_{k,k-1}$.

a) We recall that 
\begin{equation*}
2I_{k,i}=\left( 2^{k}-2^{i},\cdots ,2^{k+1-i}-2,2^{k-i},\cdots ,2\right) 
\text{ and }
\end{equation*}%
\begin{equation*}
I_{k,k-1}=\left( 2^{k-2},\cdots ,,2,1,1\right) \text{.}
\end{equation*}%
From the action of the opposite of the Steenrod algebra (\ref{Steenrod
action}) we get 
\begin{equation*}
Sq_{\ast }^{2^{i}}Q^{2^{k}-2^{i-1}}=\tsum \binom{2^{k}-2^{i-1}-2^{i}}{%
2^{i}-2t}Q^{2^{k}-2^{i-1}-2^{i}+t}Sq_{\ast }^{t}\text{.}
\end{equation*}%
For $t=2^{i-1}$\ we get $Q^{2^{k}-2^{i}}Sq_{\ast }^{2^{i-1}}$. If $t<2^{i-1}$%
, then $2^{i}-2^{t}$\ contains a summand $2^{s}$ such that $s<i-1$ and $%
2^{k}-2^{i-1}-2^{i}$\ does not. Therefor $\binom{2^{k}-2^{i-1}-2^{i}}{%
2^{i}-2t}\equiv 0\func{mod}2$. Inductively we get \ 
\begin{equation*}
Sq_{\ast }^{2^{i}}Q^{2I_{k,i-1}}=Q^{\left( 2^{k}-2^{i},\cdots
,2^{k+1-i}-2\right) }Sq_{\ast }^{2}Q^{\left( 2^{k+1-i},\cdots ,2\right) }%
\text{.}
\end{equation*}%
Now $\binom{2^{k+1-i}-2}{2-2t}\equiv 1$ for $t=0$\ or $1$. For $t=0$\ we get
the expected element%
\begin{equation*}
Sq_{\ast }^{2^{i}}Q^{2I_{k,i-1}}=Q^{2I_{k,i}}\text{.}
\end{equation*}%
For $t=1$\ we get \ 
\begin{equation*}
Q^{\left( 2^{k}-2^{i},\cdots ,2^{k+1-i}-2\right) }Q^{2^{k-i+1}-1}Sq_{\ast
}^{1}Q^{2^{k-i}}Q^{\left( 2^{k-1-i},\cdots ,2\right) }=
\end{equation*}%
\begin{equation*}
Q^{\left( 2^{k}-2^{i},\cdots ,2^{k+1-i}-2\right)
}Q^{2^{k-i+1}-1}Q^{2^{k-i}-1}Q^{\left( 2^{k-1-i},\cdots ,2\right) }
\end{equation*}%
and the last monomial has excess zero.

The case b) is similar. We proceed to case c).

c) $2I_{k,i}+I_{k,k-1}=$\newline
$\left( 2^{k}-2^{i}+2^{k-2},\cdots
,2^{k+1-i}-2+2^{k-i-1},2^{k-i}+2^{k-i-2},\cdots ,2^{2}+1,2+1\right) $. 
\begin{equation*}
Sq_{\ast }^{2^{k-1}}Q^{2^{k}-2^{i}+2^{k-2}}Q^{2I_{k-1,i-1}+I_{k-1,k-2}}=
\end{equation*}%
\begin{equation*}
\tsum \binom{2^{k-1}-2^{i}+2^{k-2}}{2^{k-1}-2t}%
Q^{2^{k-1}-2^{i}+2^{k-2}+t}Sq_{\ast }^{t}Q^{2I_{k-1,i-1}+I_{k-1,k-2}}\text{.}
\end{equation*}%
The case $t<2^{k-2}$\ is eliminated because of negative excess:%
\begin{equation*}
2^{k-1}-2^{i}+2^{k-2}+2t<2^{k}-2^{i}+2^{k-2}\text{.}
\end{equation*}%
Therefore only the case $t=2^{k-2}$\ remains and this case ends up at 
\begin{equation*}
Sq_{\ast }^{2}Q^{2^{2}+1}Q^{2+1}=Q^{2+1}Q^{2+1}+Q^{2^{2}}Sq_{\ast
}^{1}Q^{2+1}=Q^{2+1}Q^{2+1}\text{.}
\end{equation*}%
Finally, $Sq_{\ast
}^{2^{k-1}}Q^{2I_{k,i}+I_{k,k-1}}=Q^{2I_{k,k-1}+I_{k,i+1}}\neq Q^{2I_{k,i}}$.
\end{proof}

The following corollary is just an application of the last proposition.

\begin{corollary}
Let $I=\tsum\limits_{i=0}^{k-1}a_{i}I_{k,i}$ such that $a_{i}\equiv 0\func{%
mod}2^{n}$ for all $i$,\ then 
\begin{equation*}
\varphi _{k+n-1}\cdots \varphi _{k}\left( Q^{I}\underset{n}{\underbrace{%
Q^{0}\cdots Q^{0}}}\right) =Q^{J}
\end{equation*}%
where $J=\tsum\limits_{i=0}^{k-1}b_{i}I_{k+n,i+n}$ and $a_{i}=2^{n}b_{i}$\
for all $i$.\ 
\end{corollary}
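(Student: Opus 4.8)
The plan is to induct on $n$, using the preceding proposition as the engine. The case $n=1$ is precisely part ii) of that proposition: if $a_{i}\equiv 0\func{mod}2$ for all $i$ and we write $a_{i}=2b_{i}$, then $\varphi _{k}Q^{I}=Q^{J}$ with $J=\tsum\limits_{i=0}^{k-1}b_{i}I_{k+1,i+1}$, which is the asserted identity for $n=1$.

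For the inductive step I would assume the corollary for the exponent $n$ (and for every length), take $a_{i}\equiv 0\func{mod}2^{n+1}$ for all $i$, and write $a_{i}=2^{n+1}b_{i}$. First I would strip off the innermost map $\varphi _{k}$: since each $a_{i}$ is even, part ii) of the preceding proposition gives $\varphi _{k}Q^{I}=Q^{I^{\prime }}$ with $I^{\prime }=\tsum\limits_{i=0}^{k-1}2^{n}b_{i}\,I_{k+1,i+1}$. The point that needs care is that the inductive hypothesis must be applied to $Q^{I^{\prime }}$ written in Madsen's canonical form for length $k+1$, namely $I^{\prime }=\tsum\limits_{t=0}^{k}c_{t}I_{k+1,t}$ with $c_{0}=0$ and $c_{i+1}=2^{n}b_{i}$. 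Since $0$ and every $2^{n}b_{i}$ are divisible by $2^{n}$, the hypothesis applies with exponent $n$ to $Q^{I^{\prime }}$, and applying the remaining $n$ maps $\varphi _{k+n}\cdots \varphi _{k+1}$ yields $Q^{J}$ with $J=\tsum\limits_{t=0}^{k}d_{t}I_{k+1+n,\,t+n}$ where $c_{t}=2^{n}d_{t}$, i.e.\ $d_{0}=0$ and $d_{i+1}=b_{i}$. Reindexing and using $(k+1)+n=k+(n+1)$, this is $J=\tsum\limits_{i=0}^{k-1}b_{i}I_{k+(n+1),\,i+(n+1)}$; together with $a_{i}=2^{n+1}b_{i}$ this is exactly the claim for $n+1$, and composing $\varphi _{k+n}\cdots \varphi _{k}$ gives the stated equality.

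I expect the only genuinely delicate point to be the bookkeeping of the primitive-decomposition indices under $\varphi $: part ii) shifts $I_{k,i}\mapsto I_{k+1,i+1}$ and halves the coefficient, so after $n$ iterations the surviving primitives are the $I_{k+n,i+n}$ and the coefficients of $I_{k+n,0},\dots ,I_{k+n,n-1}$ are forced to vanish. One must observe that $0$ is divisible by every power of $2$, so the divisibility hypothesis of the corollary propagates unchanged through the induction, and that each intermediate monomial is again a non-negative integer combination of the relevant primitives, hence admissible of non-negative excess by Proposition \ref{madsen primitive decomposition}, so that it really lies in the appropriate $\mathcal{R}\left[ \,\cdot \,\right] $ and the next $\varphi $ is legitimately applied. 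Apart from that, the argument is the substitution $a_{i}=2^{n}b_{i}$ already carried out in the preceding proposition.
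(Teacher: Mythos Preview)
Your argument is correct, and it is precisely the intended one: the paper's entire proof is the sentence ``The following corollary is just an application of the last proposition,'' and your induction on $n$ using part~ii) of that proposition, with the reindexing $c_{0}=0$, $c_{i+1}=2^{n}b_{i}$ at each step, is exactly how one unpacks that remark.
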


We end this section by summarizing the results above in the following two
diagrams.

\begin{equation*}
\begin{array}{ccccccccc}
&  &  &  &  &  &  &  &  \\ 
& \mathcal{F}_{0}\left[ k\right] &  &  &  &  & \underset{\rightarrowtail }{%
\lim }\mathcal{F}_{0}\left[ k\right] &  &  \\ 
\swarrow &  & \searrow &  &  & \swarrow &  & \searrow &  \\ 
\mathcal{F}_{0}\left( k\right) &  & \mathcal{U}\left[ k\right] &  & \underset%
{\rightarrowtail }{\lim }\mathcal{F}\left( k\right) &  &  &  & \underset{%
\rightarrowtail }{\lim }\mathcal{U}\left[ k\right] \\ 
\downarrow &  &  &  & \downarrow &  &  &  &  \\ 
\mathcal{A}_{2}\left( k\right) &  & \downarrow &  & \underset{%
\rightarrowtail }{\lim }\mathcal{A}_{2}\left( k\right) &  &  &  & \downarrow
\\ 
&  & \mathcal{R}\left[ k\right] &  &  &  &  &  & \underset{\rightarrow }{%
\lim }\mathcal{R}\left[ k\right]%
\end{array}%
\end{equation*}

\section{A map between the Steenrod and Dyer-Lashof coalgebras}

We conclude this note by filling the diagram above discussing the induced
map between $\mathcal{A}_{2}\left( k\right) $\ and $\mathcal{R}\left[ k%
\right] $.

\begin{equation*}
\begin{array}{ccc}
&  & \mathcal{U}\left[ k\right] \\ 
& \swarrow &  \\ 
\mathcal{A}_{2}\left( k\right) &  & \downarrow \\ 
& \searrow &  \\ 
&  & \mathcal{R}\left[ k\right]%
\end{array}%
\end{equation*}%
\bigskip

Let $\phi _{k}:\mathcal{U}\left[ k\right] \rightarrow \mathcal{A}_{2}\left(
k\right) $ be the obvious map given by 
\begin{equation*}
\phi _{k}\left( Q^{I}Q^{0}\cdots Q^{0}\right) =Q^{I}\text{.}
\end{equation*}
This is not an injection map. For example $\phi _{2}\left( Q^{1}Q^{1}\right)
=0$. But it is a coalgebra epimorphism:\newline
Let $Q^{I}\in \mathcal{A}_{2}\left( k\right) $\ with $l\left( I\right)
=m\leq k$\ and $I$\ an admissible sequence. Then, if $I=\left( i_{m},\cdots
,i_{1}\right) $, $i_{t}\geq 2i_{t-1}$\ for all $t$, $\phi _{k}\left(
Q^{I}Q^{0}\cdots Q^{0}\right) =Q^{I}$. \ 

We extend the map $\phi _{k}$ from $\mathcal{A}_{2}\left( k\right) $\ to $%
\mathcal{R}\left[ k\right] $\ in the obvious way:%
\begin{equation*}
\pi _{k}:\mathcal{A}_{2}\left( k\right) \rightarrow \mathcal{R}\left[ k%
\right] \text{ given by }\pi _{k}\left( Q^{I}\right) =\left( Q^{I}\underset{n%
}{\underbrace{Q^{0}\cdots Q^{0}}}\right) \text{\ for }n=k-l\left( I\right) 
\text{\ \ }
\end{equation*}%
\begin{equation*}
\text{and }\pi _{k}\left( Q^{I}\right) =Q^{I}\text{ for }k=l\left( I\right) 
\text{.}
\end{equation*}%
We must note that the last diagram is not a commutative diagram under the
given maps. For example: $Q^{1}Q^{1}\in \mathcal{U}\left[ k\right] \cap 
\mathcal{R}\left[ k\right] $\ but 
\begin{equation*}
\pi _{2}\phi _{2}\left( Q^{1}Q^{1}\right) =0\text{.}
\end{equation*}%
Nevertheless, the map $\pi _{k}$\ is an epimorphism of coalgebras.\ \ \ 

\begin{theorem}
The map $\pi _{k}:\mathcal{A}_{2}\left( k\right) \rightarrow \mathcal{R}%
\left[ k\right] $\ is an epimorphism of coalgebras.
\end{theorem}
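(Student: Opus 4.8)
The statement packages two claims — that $\pi_{k}$ respects comultiplication and that it is onto — and I would establish them in turn.

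\emph{$\pi_{k}$ is a morphism of coalgebras.} Since $\pi_{k}$ is prescribed on the admissible $\mathbb{F}_{2}$-basis of $\mathcal{A}_{2}(k)$ it is automatically a well-defined degree-preserving linear map carrying the grouplike element (the class of $1$) to $(Q^{0})^{k}$ and respecting the augmentations, so the content is the identity $(\pi_{k}\otimes\pi_{k})\psi=\psi\,\pi_{k}$. I would verify it on an admissible $Q^{S}$ of length $m\le k$, using that both coproducts descend from the universal one $\psi(Q^{i})=\sum_{t}Q^{i-t}\otimes Q^{t}$ of $\mathcal{F}_{0}$. Expanding $\psi(\pi_{k}Q^{S})=\psi(Q^{S}Q^{0}\cdots Q^{0})$ in $\mathcal{F}_{0}$ and using the basic fact that a monomial is nonzero in $\mathcal{R}[k]$ only when all of its $Q^{0}$'s sit at the bottom — a $Q^{0}$ with a positive entry below it gives a sub-block of negative excess, already $0$ in $\mathcal{U}$ — one finds, after reducing by the homology Adem relations, a sum over the splittings $S=A+B$ in which each tensor slot has the shape ``positive entries then trailing zeros''. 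The companion expansion of $(\pi_{k}\otimes\pi_{k})\psi(Q^{S})$ instead uses $Q^{0}=1$ in $\mathcal{A}_{2}$ (which deletes $Q^{0}$'s), the cohomology Adem relations, and then re-insertion of $Q^{0}$'s at the bottom by $\pi_{k}$. I would match the two sums through the evident bijection of their index sets that pushes the $Q^{0}$'s to the bottom of each slot; under it corresponding terms agree. This is a bookkeeping argument in the spirit of part c) of the Theorem on the Milnor basis and of the Propositions already computing $\varphi_{k}$, and it repeatedly uses that both families of Adem relations strictly lower the total ordering of sequences.

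\emph{$\pi_{k}$ is onto.} I would show that every admissible $Q^{I}\in\mathcal{R}[k]$ (necessarily of non-negative excess) is in the image. Using Madsen's decomposition $I=\sum_{t=0}^{k-1}a_{t}I_{k,t}$ (Proposition~\ref{madsen primitive decomposition}) and the Milnor decomposition $Q^{S}=Q^{\sum_{j=1}^{k}n_{j}S_{j,j}}$ of admissibles of length $\le k$ in $\mathcal{A}_{2}(k)$, I would put $n_{j}=2^{\,k-j}a_{k-j}$ and verify $\pi_{k}(Q^{S})=Q^{I}$ by the analogous direct computation: append $k-l(S)$ copies of $Q^{0}$ and reduce in $\mathcal{R}$, invoking negative-excess vanishing and the homology Adem relations exactly as in the Corollary that iterates $\varphi_{k}$. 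On the primitives this reads $\pi_{k}\!\left(Q^{2^{t}S_{k-t,k-t}}\right)=Q^{I_{k,t}}$, which one checks by hand (e.g. $\pi_{2}(Q^{2})=Q^{1}Q^{1}$, $\pi_{2}(Q^{2}Q^{1})=Q^{2}Q^{1}$, $\pi_{3}(Q^{4})=Q^{2}Q^{1}Q^{1}$, $\pi_{3}(Q^{4}Q^{2})=Q^{3}Q^{2}Q^{1}$, $\pi_{3}(Q^{4}Q^{2}Q^{1})=Q^{4}Q^{2}Q^{1}$), and the general case follows by the same reductions. As $(a_{0},\dots,a_{k-1})$ runs over $\mathbb{N}^{k}$ (together with $I=0$) this yields a preimage of every basis monomial of $\mathcal{R}[k]$. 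Together with the compatibility $\varphi_{k}\circ\pi_{k}=\pi_{k+1}\vert_{\mathcal{A}_{2}(k)}$ this also gives corollary~\ref{final corollary}.

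The main obstacle is the coalgebra identity: as the text shows with $Q^{1}Q^{1}$, the square relating $\phi_{k}$ and $\pi_{k}$ genuinely fails to commute on individual monomials, so the identity cannot be proved term by term. What rescues it is that the discrepancies occur in matching pairs, one member killed by passage to the cohomology Adem relations (via $Q^{0}=1$) and the other by negative-excess vanishing in $\mathcal{R}$; pinning this pairing down uniformly in degree and length is the crux, and once it is in place surjectivity is only a combinatorial translation between the Madsen and Milnor bases. An alternative route to the first part is dualization: $\pi_{k}$ is a coalgebra epimorphism iff $\pi_{k}^{\ast}\colon\mathcal{R}[k]^{\ast}\to\mathcal{A}_{2}(k)^{\ast}$ is an injective algebra map, and since both targets are polynomial algebras one may instead write $\pi_{k}^{\ast}$ on polynomial generators via the Kronecker pairing and check algebraic independence of the images — but identifying $\pi_{k}^{\ast}$ on generators requires essentially the same bookkeeping.
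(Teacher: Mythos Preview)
Your surjectivity argument has a genuine gap. You claim that with $n_{j}=2^{k-j}a_{k-j}$ the single admissible monomial $Q^{S}$, $S=\sum_{j}n_{j}S_{j,j}$, satisfies $\pi_{k}(Q^{S})=Q^{I}$, and that this ``follows by the same reductions'' from the primitive case. It does not: the homology Adem relations in general produce several admissible terms, not one. Concretely, take $k=2$, $a_{0}=2$, $a_{1}=3$, so $I=(7,5)$ and your $S=J(I)=(10,2)$; then
\[
\pi_{2}\bigl(Q^{10}Q^{2}\bigr)=\sum_{t}\binom{t-3}{2t-10}Q^{12-t}Q^{t}=Q^{7}Q^{5}+Q^{6}Q^{6}\neq Q^{I}.
\]
Your $S$ coincides with the paper's $J(I)=\sum_{t}2^{t}a_{t}I_{k-t,0}$, so you and the paper start at the same place; the difference is that the paper never asserts $\pi_{k}(Q^{J(I)})=Q^{I}$. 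Instead it shows $\pi_{k}(Q^{J(I)})=Q^{I}+\sum c_{t}Q^{I_{t}(I)}$ with each $I_{t}(I)>I$ in the total order, and then runs an explicit triangular correction: pick the smallest unwanted $I_{t_{0}}(I)$, form $J(I_{t_{0}}(I))$, add it in, and iterate. Because the new correction sequences are strictly smaller than $J(I)$ and the new error terms are strictly larger than $I_{t_{0}}(I)$, the process terminates and yields an element $K\in\mathcal{A}_{2}(k)$ with $\pi_{k}(K)=Q^{I}$. Your argument is missing exactly this correction (or, equivalently, the triangularity observation that the error terms are all $>I$, which by itself already gives surjectivity).

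On the coalgebra compatibility: the paper's proof of the theorem addresses only surjectivity and does not spell out $(\pi_{k}\otimes\pi_{k})\psi=\psi\pi_{k}$; your discussion of this point is more than the paper provides, though your sketch remains heuristic (the ``matching pairs'' cancellation you describe is asserted rather than exhibited).
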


\begin{proof}
Let $Q^{I}\in \mathcal{R}^{^{\prime }}\left[ k\right] $ such that $%
I=\tsum\limits_{t=0}^{k-1}a_{t}I_{k,t}$\ and $\tsum%
\limits_{t=0}^{k-1}a_{t}>0 $\ (\ref{madsen primitive decomposition}). We
shall define an element $K\in \mathcal{A}_{2}\left( k\right) $ such that $%
\pi _{k}\left( K\right) =Q^{I}$.

Let $Q^{J\left( I\right) }\in \mathcal{A}_{2}\left( k\right) $ such that 
\begin{equation*}
J\left( I\right) =\tsum\limits_{t=0}^{k-1}2^{t}a_{t}I_{k-t,0}\text{.}
\end{equation*}%
Namely, if $J\left( I\right) =\left( b_{k},\cdots ,b_{1}\right) $, then $%
b_{t+1}=2^{t}\left( a_{0}+\cdots +a_{t}\right) $\ for $0\leq t\leq k-1$. Let
us also define $J_{t}\left( I\right) =\left( b_{k},\cdots ,b_{t+1}\right) $\
for $0\leq t\leq k-1$.\ 

Let us first consider the case $I=\left( 2a_{0}+a_{1},a_{0}+a_{1}\right) $.
In this case $J\left( I\right) =\left( 2\left( a_{0}+a_{1}\right)
,a_{0}\right) $. We start with $Q^{2\left( a_{0}+a_{1}\right) }Q^{a_{0}}$\
and apply the Adem relations in the Dyer-Lashof algebra.%
\begin{equation*}
Q^{2\left( a_{0}+a_{1}\right) }Q^{a_{0}}\overset{Adem}{=}\tsum \binom{%
t-a_{0}-1}{2t-2\left( a_{0}+a_{1}\right) }Q^{\left( 2+1\right)
a_{0}+2a_{1}-t}Q^{t}\text{.}
\end{equation*}%
Here we shall take into account the restrictions: 
\begin{equation*}
\left( 2+1\right) a_{0}+2a_{1}\geq 2t\text{.}
\end{equation*}%
For $t=a_{0}+a_{1}$, we have the required term 
\begin{equation}
Q^{2a_{0}+a_{1}}Q^{a_{0}+a_{1}}\text{.}
\end{equation}%
For $t=a_{0}+a_{1}+s$ such that $2s\leq a_{0}$\ and for $\binom{s+a_{1}-1}{2s%
}\neq 0\func{mod}2$, we have the non-required term 
\begin{equation*}
Q^{2a_{0}+a_{1}-s}Q^{a_{0}+a_{1}+s}\text{.}
\end{equation*}%
In other words the monomial $Q^{J_{2}}Q^{2a_{0}+a_{1}-s}Q^{a_{0}+a_{1}+s}$
appears and it will be eliminated by adding an appropriate term $%
Q^{J^{\left( 1\right) }}$\ to $Q^{J\left( I\right) }$, where 
\begin{equation*}
J^{\left( 1\right) }=(a_{0}+a_{1}+s,a_{0}-s)\text{.}
\end{equation*}%
This is because the term $Q^{a_{0}+a_{1}+s}Q^{a_{0}-2s}$ will provide $%
Q^{2a_{0}+a_{1}-s}Q^{a_{0}+a_{1}+s}$\ after applying the Adem relations. We
start with the smallest $s$ such that 
\begin{equation*}
\binom{s+a_{1}-1}{2s}\neq 0\func{mod}2
\end{equation*}%
\ and apply the Adem relations. Neither of the terms $%
Q^{a_{0}+a_{1}+s}Q^{a_{0}-2s}$ will provide $Q^{2a_{0}+a_{1}}Q^{a_{0}+a_{1}}$%
.\ \ 

It is also important to note that $J^{\left( 1\right) }<J\left( I\right) $\
and this property will be present in our procedure. In each step we define
and use a new sequence less than the previous ones so that the non-required
term is eliminated.

We describe our method by starting with $J\left( I\right) $.

Let us first recall the coefficient involved in the Adem relations $\binom{%
t-s-1}{2t-r}$ for $Q^{r}Q^{s}$ such that $r>2s$. Second, $b_{t+1}\equiv 0%
\func{mod}2^{t}$\ for $t>0$. We start with the sequence $J\left( I\right) $
and apply the relations between the first and the second element from the
left and proceed to the pair consisting of the new just defined element and
the next one. We continue in this fashion up to the pair where there is no
relation. Each time a relation is applied in the sequence $J^{\left(
s\right) }$ a new sequence is defined abbreviated by $J^{\left( s+1\right) }$
starting with $J\left( I\right) =J^{\left( 0\right) }$. Each time a relation
is applied between $b_{i}^{\left( s\right) }$\ and $b_{i-1}^{\left( s\right)
}$\ we first consider only the case $t=\frac{b_{i}^{\left( s\right) }}{2}$\
and call%
\begin{equation*}
b_{i}^{\left( s+1\right) }=b_{i}^{\left( s\right) }+b_{i-1}^{\left( s\right)
}-\frac{b_{i}^{\left( s\right) }}{2}\text{ and }b_{i-1}^{\left( s+1\right) }=%
\frac{b_{i}^{\left( s\right) }}{2}\text{.}
\end{equation*}%
Therefore a new sequence is defined $J^{\left( s+1\right) }=$ 
\begin{equation*}
\left( b_{k}^{\left( s+1\right) }=b_{k}^{\left( s\right) },\cdots
,b_{i+1}^{\left( s+1\right) }=b_{i+1}^{\left( s\right) },b_{i}^{\left(
s+1\right) },b_{i-1}^{\left( s+1\right) },b_{i-2}^{\left( s+1\right)
}=b_{i-2}^{\left( s\right) },\cdots ,b_{1}^{\left( s+1\right)
}=b_{1}^{\left( s\right) }\right) .
\end{equation*}%
Following the pattern above we get%
\begin{equation*}
J^{\left( k-1\right) }=\left( b_{k}^{\left( k-1\right) },\cdots
,b_{1}^{\left( k-1\right) }\right)
\end{equation*}%
with $b_{1}^{\left( k-1\right) }=\tsum\limits_{0}^{k-1}a_{t}$\ and $%
b_{i}^{\left( k-1\right)
}=2^{i-1}\tsum\limits_{0}^{k-1}a_{t}-2^{i-2}\tsum\limits_{0}^{i-1}a_{t}$ for 
$k\geq i>1$.

According to our procedure 
\begin{equation*}
b_{1}^{\left( k-1-k-2\right) }=\tsum\limits_{0}^{k-1}a_{t}\text{, }%
b_{2}^{\left( k-1-k-2\right) }=2\tsum\limits_{0}^{k-1}a_{t}-a_{k-1}\text{
and }
\end{equation*}%
\begin{equation*}
b_{i}^{\left( k-1+k-2\right)
}=2^{i-1}\tsum\limits_{0}^{k-1}a_{t}-2^{i-2}\tsum%
\limits_{0}^{i-1}a_{t}-2^{i-3}\tsum\limits_{0}^{i-2}a_{t}\text{ for }k\geq
i>2\text{.}
\end{equation*}%
We note that there is no relation between $b_{2}^{\left( k-1+k-2\right) }$
and $b_{1}^{\left( k-1+k-2\right) }=b_{1}^{\left( k-1\right) }$. After $%
\frac{k\left( k-1\right) }{2}$ steps we have $J^{\left( \frac{k\left(
k-1\right) }{2}\right) }=I$. To finish off we also have to consider terms
coming from the Adem relations with corresponding $t$\ such that $t>\frac{a}{%
2}$. Our method is described after we demonstrate an example on the method
above.

Example for $k=3$.\newline
$I=a_{0}I_{3,0}+a_{1}I_{3,1}+a_{2}I_{3,2}$ and $J=\left(
2^{2}\tsum\limits_{0}^{2}a_{t},2\tsum\limits_{0}^{1}a_{t},a_{0}\right) $.%
\newline
$J^{\left( 1\right) }=\left(
2^{2}\tsum\limits_{0}^{2}a_{t}-2a_{2},2\tsum\limits_{0}^{2}a_{t},a_{0}%
\right) $, $J^{\left( 2\right) }=\left(
2^{2}\tsum\limits_{0}^{2}a_{t}-2a_{2},2\tsum\limits_{0}^{2}a_{t}-\tsum%
\limits_{1}^{2}a_{t},\tsum\limits_{0}^{2}a_{t}\right) $\newline
and $J^{\left( 3\right) }=\left(
2^{2}\tsum\limits_{0}^{2}a_{t}-2a_{2}-a_{1},2\tsum\limits_{0}^{2}a_{t}-\tsum%
\limits_{1}^{2}a_{t},\tsum\limits_{0}^{2}a_{t}\right) $.\ 

Given $Q^{I}\in \mathcal{R}^{^{\prime }}\left[ k\right] $ such that $%
I=\tsum\limits_{t=0}^{k-1}a_{t}I_{k,t}$\ and $\tsum%
\limits_{t=0}^{k-1}a_{t}>0 $\ a sequence $J\left( I\right) $ defined above
such that $Q^{J\left( I\right) }\in \mathcal{A}_{2}\left( k\right) $ and $%
J\left( I\right) =\tsum\limits_{t=0}^{k-1}2^{t}a_{t}I_{k-t,0}$. The Adem
relations are applied on $c_{I}Q^{J\left( I\right) }$: 
\begin{equation*}
Q^{J\left( I\right) }\overset{Adem}{=}Q^{I}+\tsum c_{t}\left( I\right)
Q^{I_{t}\left( I\right) }\text{.}
\end{equation*}%
Here $Q^{I_{t}\left( I\right) }$\ is an admissible\ monomial and $%
I_{t}\left( I\right) >I$\ because of the Adem relations. Next we consider $%
t_{0}$ such that $I_{t_{0}}\left( I\right) <I_{t}\left( I\right) $\ for $%
t\neq t_{0}$. A new sequence $J\left( I_{t_{0}}\left( I\right) \right) $\ is
defined as above such that $Q^{J\left( I_{t_{0}}\left( I\right) \right) }\in 
\mathcal{A}_{2}\left( k\right) $ and 
\begin{equation*}
Q^{J\left( I_{t_{0}}\left( I\right) \right) }\overset{Adem}{=}%
Q^{I_{t_{0}}\left( I\right) }+\tsum c_{s}\left( I_{t_{0}}\right)
Q^{I_{s}\left( I_{t_{0}}\right) }\text{.}
\end{equation*}%
Here again $Q^{I_{s}\left( I_{t_{0}}\right) }$\ is an admissible\ monomial
and $I_{s}\left( I_{t_{0}}\right) >I_{t_{0}}\left( I\right) $. Moreover, $%
J\left( I_{t_{0}}\left( I\right) \right) $\ $<J\left( I\right) $. Now we
take 
\begin{equation*}
Q^{J\left( I\right) }+Q^{J\left( I_{t_{0}}\left( I\right) \right) }\overset{%
Adem}{=}Q^{I}+\tsum b_{r}Q^{I_{r}\left( I,I_{t_{0}}\right) }
\end{equation*}%
and consider $r_{0}$ such that $I_{r_{0}}\left( I,I_{t_{0}}\right)
<I_{r}\left( I,I_{t_{0}}\right) $\ for $r\neq r_{0}$.

Proceeding in this fashion an element $K\in \mathcal{A}_{2}\left( k\right) $
will be defined in finite steps such that $\pi _{k}\left( K\right) =Q^{I}$.
\ 
\end{proof}

\begin{corollary}
\label{final corollary}There exists an induced coalgebra map $\pi :\mathcal{A%
}_{2}\rightarrow \underset{\rightarrow }{\lim }\mathcal{R}\left[ k\right] $.
\end{corollary}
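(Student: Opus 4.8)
The plan is to assemble the corollary from the maps already constructed, namely the family $\pi_k : \mathcal{A}_2(k) \to \mathcal{R}[k]$ (shown to be coalgebra epimorphisms in the preceding theorem) together with the structural maps on both sides: the inclusions $\mathcal{A}_2(k)\hookrightarrow \mathcal{A}_2(k+1)$ (with $\mathcal{A}_2 = \underrightarrow{\lim}\,\mathcal{A}_2(k)$, as recorded after the remark on the $\mathcal{A}^{op}$-action) and the maps $\varphi_k : \mathcal{R}[k]\to\mathcal{R}[k+1]$, $\varphi_k Q^I = Q^I Q^0$, studied in the proposition of section 3. Since $\mathcal{A}_2$ is the colimit of the $\mathcal{A}_2(k)$, it suffices by the universal property to check that the composites $\mathcal{A}_2(k)\xrightarrow{\pi_k}\mathcal{R}[k]\to\underrightarrow{\lim}\,\mathcal{R}[k]$ are compatible with the inclusions $\mathcal{A}_2(k)\hookrightarrow\mathcal{A}_2(k+1)$; that is, that
\begin{equation*}
\varphi_k \circ \pi_k = \pi_{k+1}\big|_{\mathcal{A}_2(k)} : \mathcal{A}_2(k) \to \mathcal{R}[k+1].
\end{equation*}
This identity is essentially a definitional check on admissible monomials: if $Q^I\in\mathcal{A}_2(k)$ with $l(I) = m \le k$, then $\pi_k(Q^I) = Q^I (Q^0)^{k-m}$ and $\pi_{k+1}(Q^I) = Q^I (Q^0)^{k+1-m}$, while $\varphi_k\big(Q^I(Q^0)^{k-m}\big) = Q^I(Q^0)^{k-m}Q^0 = Q^I(Q^0)^{k+1-m}$, so the square commutes on the nose. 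Granting this, the maps $\pi_k$ pass to a well-defined map $\pi : \mathcal{A}_2 = \underrightarrow{\lim}\,\mathcal{A}_2(k) \to \underrightarrow{\lim}\,\mathcal{R}[k]$ on colimits.

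It then remains to verify that $\pi$ is a map of coalgebras. Each $\pi_k$ is a coalgebra map by the theorem just proved, and each $\varphi_k$ is a coalgebra map by the proposition in section 3; the comultiplications on $\mathcal{A}_2$ and on $\underrightarrow{\lim}\,\mathcal{R}[k]$ are the colimits of those on the terms $\mathcal{A}_2(k)$ and $\mathcal{R}[k]$ respectively (filtered colimits commute with the finite tensor products appearing in the coproduct, and $\otimes$ over $\Bbb{F}_2$ is exact), so compatibility with comultiplication and counit is inherited termwise. Thus $\pi$ is automatically a coalgebra morphism. One may additionally remark, as the wording of the corollary suggests is intended, that each $\pi_k$ being an epimorphism forces $\pi$ to have dense image in the colimit; whether one wants to assert $\pi$ itself is an epimorphism depends on how $\underrightarrow{\lim}\,\mathcal{R}[k]$ is topologized, but at the level of the underlying colimit of vector spaces surjectivity of each $\pi_k$ gives surjectivity of $\pi$.

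The only genuine subtlety — and the step I would write out with care — is the compatibility square $\varphi_k\circ\pi_k = \pi_{k+1}|_{\mathcal{A}_2(k)}$, because $\pi_k$ is \emph{not} a ring map and, as the text emphasizes with the example $\pi_2\phi_2(Q^1Q^1) = 0$, padding an admissible monomial with $Q^0$'s can interact nontrivially with Adem relations once one is inside $\mathcal{R}[k+1]$. However, the point is that $\pi_{k}$ is defined purely on the admissible basis of $\mathcal{A}_2(k)$ by appending $Q^0$'s, with \emph{no} Adem reduction performed on the $\mathcal{A}_2$ side, so the identity $Q^I(Q^0)^{k-m}\cdot Q^0 = Q^I(Q^0)^{k+1-m}$ holds in $\mathcal{R}[k+1]$ before any reduction and the square commutes on basis elements, hence everywhere by linearity. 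I do not expect any obstacle beyond carefully separating "the formula defining $\pi_k$ on the basis" from "the Adem relations that already hold in $\mathcal{R}[k]$"; once that distinction is made the corollary is immediate.
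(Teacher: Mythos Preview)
Your argument is correct and is precisely the routine verification the paper leaves implicit: the corollary is stated without proof, immediately after the theorem establishing that each $\pi_k$ is a coalgebra epimorphism, and your compatibility check $\varphi_k\circ\pi_k=\pi_{k+1}\!\restriction_{\mathcal{A}_2(k)}$ on the admissible basis together with the universal property of the colimit is exactly what is intended. The caution you flag about Adem reductions is handled cleanly by the fact that $\varphi_k$ is induced from the map $Q^J\mapsto Q^JQ^0$ on $\mathcal{F}_0[k]$ and hence is already well defined modulo the Dyer--Lashof relations, so no further work is needed.
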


\end{document}